\numberwithin{equation}{section}
\newtheorem{theorem}{Theorem}[section]
\newtheorem{definition}[theorem]{Definition}
\newtheorem{lemma}[theorem]{Lemma}
\newtheorem{proposition}[theorem]{Proposition}
\newtheorem{remark}{Remark}
\numberwithin{equation}{section}
\definecolor{Red}{cmyk}{0,1,1,0}
\definecolor{Blue}{cmyk}{1,1,0,0}
\definecolor{Green}{cmyk}{1,0,1,0}
\DeclareMathOperator{\dis}{dis}
\DeclareMathOperator{\ext}{Ext}
\DeclareMathOperator{\err}{Err}
\DeclareMathOperator{\ahom}{\bar{\mathbf{a}}}
\DeclareMathOperator{\nan}{\nabla \cdot \mathbf{a} \nabla}
\DeclareMathOperator{\nad}{\nabla\cdot \mathbf{a} \nabla}
\DeclareMathOperator{\naN}{\nabla_N \cdot \mathbf{a}_N \nabla_N}
\DeclareMathOperator{\naeps}{\nabla \cdot \mathbf{a}_\varepsilon \nabla}
\DeclareMathOperator{\nah}{\nabla \cdot \ahom \nabla}
\DeclareMathOperator{\sym}{sym}
\DeclareMathOperator{\1}{\mathbf{1}}
\DeclareMathOperator{\Id}{Id}
\newcommand{\mc}[1]{{\mathcal #1}}
\newcommand{\bb}[1]{{\mathbb #1}}
\newcommand{\<}{\langle}
\renewcommand{\>}{\rangle}
\renewcommand{\>}{\rangle}
\def\1{{\mathchoice {\rm 1\mskip-4mu l} {\rm 1\mskip-4mu l}
{\rm 1\mskip-4.5mu l} {\rm 1\mskip-5mu l}}}
\title[Stochastic homogenization of Gaussian fields on random media]{Stochastic homogenization of Gaussian fields on random media}
\author{Leandro Chiarini} 
\address[Utrecht University]{Mathematical Institute}
\address{Budapestlaan 6, 3584 CD Utrecht, The Netherlands}
\email{l.chiarinimedeiros@uu.nl}          
\author{Wioletta M. Ruszel} 
\email{w.m.ruszel@uu.nl}          
\begin{document}
\keywords{stochastic homogenization, Gaussian fields on random media, scaling limit}

\subjclass[2010]{Primary: 60K37; 60G15; 60G60; 35B27; Secondary: 60J60; 60G20}

\maketitle

\begin{abstract}
In this article, we study stochastic homogenization of non-homogeneous Gaussian free fields $\Xi^{g,{\bf a}} $ and bi-Laplacian fields $\Xi^{b,{\bf a}}$.
They can be characterized as follows: for $f=\delta$ the solution $u$ of $\nabla \cdot \mathbf{a} \nabla u =f$, ${\bf a}$ is a uniformly elliptic random environment, is the covariance of $\Xi^{g,{\bf a}}$.
When $f$ is the white noise, the field $\Xi^{b,{\bf a}}$ can be viewed as the distributional solution of the same elliptic equation.
Our results characterize the scaling limit of such fields on both, a sufficiently regular domain $D\subset \mathbb{R}^d$, or on the discrete torus.
Based on stochastic homogenization techniques applied to the eigenfunction basis of the Laplace operator $\Delta$, we will show that such families of fields converge to an appropriate multiple of the GFF resp. bi-Laplacian. The limiting fields are determined by their respective homogenized operator $\ahom \Delta$, with constant $\ahom$ depending on the law of the environment ${\bf a}$.
The proofs are based on the results found in \cite{Armstrong2019} and \cite{gloria2014optimal}.
\end{abstract}

\section{Introduction}\label{sec:intro} 

Heat flow through materials with randomly placed impurities (porous media) are typically modelled using parabolic or elliptic equations in divergence form and diffusion matrices ${\bf a}$ modelling the environment, \cite{ap1,ap2,ap3}. 

The steady-state solution $u$ of some corresponding parabolic PDE satisfies
\begin{equation}\label{eq:sss} 
	- \nabla \cdot {\bf a}(x) \nabla u(x)  = f(x) 
\end{equation}
on $D$ and $u=0$ on $\partial D$, where ${\bf a}: \bb R^d \to \bb R^{d \times d}$ is a matrix valued function of the random environment. Provided that certain hypotheses over ${\bf a}$ are true, e.g. that ${\bf a}$ is stationary, there exists a matrix $\ahom$ such that the solution $u$ of the equation above can be well approximated by the solution $\bar{u}$ with constant coefficients $\ahom$ given by
\begin{equation}\label{eq:sss2} 
	- \nah \bar{u}(x)  = f(x) 
\end{equation}
on $D$ and $\bar{u}=0$ on $\partial D$. 
In all cases that we consider in this article, $\ahom = c \Id$, where $\Id$ is the $d\times d$ identity matrix and $c \in \bb R_{+}$ is deterministic constant that only depends on the law of the environment. We abuse notation by denoting the constant $c$ by the same symbol $\ahom$ to simplify notation.

Stochastic homogenization aims at identifying $\ahom$  and quantifying the convergence of the solution $u$ of the heterogeneous equation to the solution of an appropriate deterministic constant-coefficient equation $\bar{u}$.

To define the approach formally, we fix the domain $D$ and introduce a parameter $0 < \varepsilon \ll 1$ that represents the ratio of microscopic and macroscopic scales. The physical intuition behind this approach is the assumption that the coefficients modelling the porous medium vary on a microscopic scale whereas on a macroscale the random environment shows effective behavior.

Then, we rescale the coefficient field ${\bf a}$ in equation \eqref{eq:sss} by defining
\begin{equation}\label{eq:1}
  -\nabla \cdot {\bf a}_{\varepsilon} \nabla u^{\varepsilon} = f,
\end{equation}
where ${\bf a}_{\varepsilon}(\cdot) = {\bf a}(\cdot/\varepsilon)$.

One can then prove that in fact, under certain the conditions on the law of the field ${\bf a}$, $u^{\varepsilon}$ converges to $\bar{u}$ as $\varepsilon \to 0$ in some appropriate functional space. 

There is a rich mathematical theory developed on stochastic homogenisation, \cite{Armstrong2019, tar, komo, braides, jurinskii1980dirichlet, kozlov1979averaging, papanicolau1979boundary, gloria2014optimal, Bella2017StochasticHO} just to mention a few.
In particular, in the last few years, quantitative results about the convergences as $\varepsilon \to 0$ were developed.

In this article, we want to use tools of quantitative stochastic homogenization in order to understand large-scale behaviour of  the non-homogeneous Gaussian free field (GFF) and bi-Laplacian field, whose covariance structure is related to non-homogeneous differential operator $\nad$, in a domain $D\subset \mathbb{R}^d$ and on the discrete torus. Those fields appear as distributional solutions of the equation \eqref{eq:sss} for different choices of $f$, i.e. $f=\delta$ Dirac delta resp. $f=\xi$ white noise.

Their homogeneous counterparts, that is, the (homogeneous) GFF $\Xi^g_D$ and bi-Laplacian field $\Xi^b_D$, are the two  most prominent examples of the family of fractional Gaussian fields, \cite{Lodhia2016}.
The GFF has a wide range of connections to e.g. scaling limits of observables of interacting particle systems \cite{Kenyon}, quantum field theory \cite{Berestycki2016IntroductionTT}, and invariant measures of Allan-Cahn type stochastic partial differential equations \cite{Hairer}.
The bi-Laplacian field or membrane model is related to scaling limits of uniform spanning trees \cite{Sun2013UniformSF}, odometer functions of sandpile models \cite{Levine, cipriani2018scaling} and general interface models \cite{funaki}.

Our main result, Theorem \ref{thm:ConvCont} states that the non-homogeneous GFF and bi-Laplacian field (see Definition~\ref{def:GFFNHom} and \ref{def:BiLapNHom}) converge
\begin{equation}\label{eq:3}
  \Xi^{g,{\bf a}_{\varepsilon}}_D
  \longrightarrow \ahom^{-1/2}\Xi^g_D 
\end{equation} 
and
\begin{equation}\label{eq:4} 
 \Xi^{b, {\bf a}_{\varepsilon}}_D
  \longrightarrow \ahom^{-1}\Xi^b_D
\end{equation} 
as $\varepsilon \to 0$,  respectively in some appropriate Sobolev space.
In the second result, Theorem~\ref{thm:ConvDisc}, we will prove the equivalent of \eqref{eq:3} and \eqref{eq:4} starting in a discretized torus.

To the best of the authors knowledge, the results presented in this work are novel and have not been previously published in the literature. 
In particular, our work extends the techniques outlined in previous papers such as \cite{Armstrong2019,gloria2014optimal} for the stochastic homogenization analysis of the bi-Laplacian.
This extension involves not only accounting for the randomness resulting from $f=\xi$, but also accommodating the lack of a point-wise definition of the white-noise by viewing it to be a fixed distribution with a predetermined negative Sobolev regularity.

The reader familiar with the GFF and bi-Laplacian field will notice that we prove convergence in a  Sobolev space with regularity strictly weaker than the optimal regularity of the GFF and the bi-Laplacian field, due to the existence of the random environment.
In fact, we do not expect that convergence of these fields would hold in such topologies, as analogue effects are also found in \cite{gloria2011optimal,Armstrong2019}. 

The main novelty of our proof is a convenient expansion of $G^\varepsilon$, the  Green's function of the operator $\naeps$,  in terms of  $\{\phi_k\}_{k\in \bb N}$, the orthonormal eigenfunction  basis of $\Delta$.
Using the fact that the operator $\naeps$ in \eqref{eq:1} is self-adjoint, we have that
\begin{equation*}
      G^\varepsilon(x,y)
      = \sum_{k = 1}^{\infty} 
      \frac{\phi_k(x) \varphi^\varepsilon_k(y)}{\ahom\lambda_k}
      = \sum_{k = 1 }^{\infty}
      \frac{\varphi^\varepsilon_k(x) \phi_k(y)}{\ahom\lambda_k},
\end{equation*}
where $\naeps \varphi^\varepsilon_k = -\lambda_k \ahom \phi_k$, $\lambda_k$ is the eigenvalue of $\Delta$ associated with $\phi_k$. With this expansion, we are able to push results from quantitative stochastic homogenization of continuous functions to convergence of the Green's function, resulting in the convergence of the non-homogeneous random fields.

The manuscript is organized as follows.
In Section \ref{sec:not}, we will introduce the notation and define  non-homogeneous Gaussian free fields and bi-Laplacian fields in regular domains and on the torus.
Our main results are presented in Section~\ref{sec:results}, and their proofs are postponed to Section~\ref{sec:proofs}.
Finally, in Section~\ref{sec:conclusion}, we discuss some possible extensions and cite related results regarding large scale behaviour of Gaussian fields in random environments.

\section{Notation and preliminaries}\label{sec:not}

\subsection{Function spaces}\label{sec:spaces}
\subsubsection{Continuous spaces}

First, we need to define the types of domains for which our results hold
\begin{definition}[Regular domains]\label{def:C11-domain}
Let $d \ge 2$ and $D \subset \bb R^d$ be a domain.
We say that $D$ is a \textit{regular domain} if it is bounded and at least one assumption of the two holds:
\begin{itemize}
\item The domain $D$ is convex; or
\item Every point of $\partial D$ (topological boundary of $D$) has a neighbourhood $\mathcal{N}$ such that $\partial D \cap \mathcal{N}$ can be represented, up to a change of variables, as the graph of a $C^{1, 1}(\mathbb{R}^{d-1})$.
\end{itemize}

\end{definition}

We will always assume that a domain is regular, according to the above definition.
For a domain $D$, consider $\{ \phi_k \}_{k \in \mathbb{N}}$ be an orthonormal basis of $L^2(D)$ composed of eigenfunctions 
\begin{equation}\label{eq:defphik}
  \begin{cases}
    \Delta \phi_k = -\lambda_k \phi_k & \text{ in }D \\
	\phantom{\Delta} \phi_k = 0 & \text{ in }\partial D,
  \end{cases} 
\end{equation}
which are enumerated so that $\lambda_k$ is non-decreasing in $k$ .
Notice that by classical regularity theory, we have that $\phi_k$ are all
$C_c^\infty(D)$.
Remember that for all $k \ge 1$, $\lambda_k >0$. 

In the case of $D=\bb T^d$, besides requiring periodicity, we also require $\int_{D} \phi_k(x) dx =0$ rather than the boundary condition in \eqref{eq:defphik}. 
In this case, it will be more convenient to index the eigenfunctions and eigenvalues by $k \in \bb Z^d \setminus \{0\}$.
This is because the Fourier basis given by $\{\phi_k\}_{k \in \bb Z^d}$, where $\phi_k:= \exp (2\pi \iota k \cdot x)$ is also an orthonormal basis of eigenvectors of the Laplacian in $\bb T^d$ with periodic boundary conditions.
Again, because of the mean-zero condition, we exclude the $0$-th Fourier term and therefore $\lambda_k >0$. 

Due to this different indexing, in what follows, we will often write $\sum_{k \neq 0}(\dots)$ to denote either $\sum_{k = 1}^\infty(\dots)$ or $\sum_{k \in \bb Z^d \setminus \{0\}} (\dots)$ depending on whether we are working on a domain or in the torus.

We will denote by $\<f,g \>$  the $L^2(D)$ inner product if $f, g \in L^2(D)$
and (by abuse of notation) the pairing between a distribution $f$ and a smooth
function $g$. 

\begin{definition} \label{def:Hspaces}

For every dimension $d \ge 1$, and $\beta >0$, consider
\begin{itemize}
  \item For $D \subset \bb R^d$ a regular domain, we define the Hilbert space $H^\beta_0(D)$ as the closure of $C^\infty_c(D)$ functions with the norm
  \begin{equation}\label{eq:defNormH}
    ||f||^2_{H^\beta_0(D)}
      :=
      \sum_{k=1}^{\infty} |\< f,\phi_k \>|^2 \lambda_k^{2\beta}.
  \end{equation}
  \item  For $D =\bb T^d$, we define the Hilbert space $H^\beta_0(\bb T^d)$ as 
  \begin{equation}\label{eq:defHbeta}
    H^\beta_0(\bb T^d):=
    \left\{  f \in L^2(\bb T^d): \int_{\bb T^d} f(x) dx =0 \text{ and }||f||_{H^\beta_0(\bb T^d)}
    < \infty\right\}
  \end{equation}
  where 
  \begin{equation}\label{eq:defNormHtorus}
    ||f||^2_{H^\beta_0(\bb T^d)}
      :=
      \sum_{k \in \bb Z^d \setminus \{0\}} |\< f,\phi_k \>|^2 \lambda_k^{2\beta}.
  \end{equation}
\end{itemize}
\end{definition}

We can also easily characterize the dual space of $H^\beta_0(D)$.
For $\beta>0$, the space $H^{-\beta}_0(D)$ is given by the continuous linear functionals on $H^\beta_0(D)$.
The weak-$*$  topology in this space induces the norm given by \eqref{eq:defNormH}, but with $\beta$ substituted by $-\beta$.
To highlight whether we are talking about a space of functions or a space of distributions, unless stated otherwise, we take $\beta$ to be positive.

For strictly technical reasons, we will need to introduce another type of fractional Sobolev spaces denoted by $W^{\alpha,p}(D)$.
We will use these spaces in the proof of Lemma~\ref{lem:growthError} where we rely on the bounds provided by Theorem~\ref{thm:AKM} from \cite{Armstrong2019}.
We refer to Remark~\ref{rem:Sobolev} for a comment on the relation between the two types of Sobolev spaces.

For $m \in \bb N_0:=\bb N \cup \{0\}$ and $p \ge 1$, we will consider the Sobolev space $W^{m,p}(D)$ 
\begin{align*}\label{def:Wkp} 
	\|f\|_{W^{m, p}(D)}:=\sum_{0 \le |\alpha| \le  m}\|\partial^{\alpha} f\|_{L^{p}(D)},
\end{align*}
where for a given multi-index $\alpha$, we denote $\partial^\alpha f =
\partial^{\alpha_1}\dots \partial^{\alpha_d} f$ and $|\alpha|=\alpha_1+\cdots
+\alpha_d$. 
The general $m\in \mathbb{R}_+$ case will be defined as follows.
\begin{definition}
The \emph{fractional Sobolev space} $W^{\beta,p}(D)$ for any $\beta > 0$, $p\geq 1$ and $d\geq 2$ is defined by  
\begin{align*}
  W^{\beta, p}(D):=\{f \in W^{\lfloor \beta \rfloor, p}(D):\| f\|_{W^{\beta, p}(D)}<\infty\}
\end{align*}
where 
\begin{equation}\label{eq:defWalphap} 
  \|f\|_{W^{\beta, p}(D)}
  :=
  \|f\|_{W^{\lfloor \beta \rfloor, p}(D)}
  +
  \sum_{|\alpha|=\lfloor \beta \rfloor}[\partial^{\alpha} f]_{W^{\beta - \lfloor \beta \rfloor, p}(D)},
\end{equation}
$\lfloor \cdot \rfloor$ stands for the floor function and  
\begin{align*}
  [f]_{W^{\delta, p}(D)}^{p}:=(1-\delta) \int_{D} \int_{D} \frac{|f(x)-f(y)|^{p}}{|x-y|^{d+\delta p}} d x d y
\end{align*}
is called the Gagliardo semi-norm for $\delta \in (0,1)$.
\end{definition}

\begin{remark}[Comparison between $H^{\beta}_0(D)$ and $W^{\beta,p}(D)$ spaces]\label{rem:Sobolev}
For the purpose of this article, we are interested in the $W^{\beta,p}(D)$ topology in order to apply Theorem~\ref{thm:AKM}, for which the results are only valid for $p >2$.
Notice that in this case $W^{p,\beta}(D)$ is not a Hilbert space, therefore, we will not directly compare the topologies $W^{\beta,p}(D)$ and $H^{\beta}_0(D)$. 

However, the $W^{\beta,2}(D)$-norm of a mean zero function $f$ can be written in terms of the $L^2(D)$-norm of the singular integral definition of the fractional Laplacian operator restricted to $D$.
On the other hand, the $H^{\beta}_0(D)$-norm of $f$ is the $L^2(D)$-norm of the spectral definition of fractional Laplacian.
These spaces do not coincide in for general domains, unless $\beta \in \bb N$.
For more information on this matter, we suggest \cite{di2012hitchhikers}. 
\end{remark} 

Finally, for two non-negative functions $f,g$ we write 
\[
  f(x) \lesssim_{\gamma_1,\gamma_2, \dots ,\gamma_k} g(x)
\]
if there exists  a positive constant $C=C(\gamma_1, \gamma_2, \dots,\gamma_k)>0$ and we want to emphasize its dependence of the parameters
such that for all $x$
\[
  f(x) \leq C(\gamma_1,\gamma_2, \dots ,\gamma_k) g(x).
\]
If we do not want to emphasize the dependence we simply will write $f\lesssim g$.

\subsubsection{Discrete spaces}\label{sec:disspa}
Let $\bb T^d_N:= \frac{1}{N}
\bb Z^d_N$, where $\bb Z^d_N:= [-\frac{N}{2},\frac{N}{2})^d \cap \mathbb{Z}^d$ is
to be understood as a discretized torus, i.e. a graph with a periodic boundary. In this context,
we use $E^d_N$ to denote the edges of the graph $\bb Z^d_N$. 

For a function $f : \bb T^d_N \longrightarrow \bb C$, we will denote its 
restriction by $\bb T^d_N$ as $f^N = f|_{\bb T^d_N}$. 
Notice that 
$\{\phi^N_k\}_{k \in \bb Z^d_N}$ is an orthonormal basis 
of the space $\ell^2(\bb T^d_N):=\bb C^{\bb T^d_N}$ with respect to  the inner product 
\begin{align*}
  ( f,g ) 
  =
  ( f,g )_{\ell^2(\bb T^d_N)}
  :=
  \frac{1}{N^d} \sum_{\hat{x} \in \bb T^d_N} f(\hat{x})\overline{g(\hat{x})}. 
\end{align*}
When discussing scaling limits of  discrete models,
we will always use $x$, $y$ to denote points of $\bb Z^d$ or $\bb Z^d_N$, and use
$\hat{x}$, $\hat{y}$ to denote points in $\bb T^d_N$.

\subsection{Gaussian fields}\label{sec:hom-gauss}

In this section, we will introduce the Gaussian free field and bi-Laplacian field in continuous domains, $D\subset \mathbb{R}^d$.
First we will consider the homogeneous  and then  the non-homogeneous setting.
Throughout this article, we distinguish between two scenarios for $D$, in which we either take $D$ to be a regular domain (cf. Definition \ref{def:C11-domain}), or $D=\bb T^d$.

\subsubsection{Gaussian fields in the continuum}

\subsubsection*{Homogeneous fields}

\begin{definition}
The Gaussian free field $\Xi_D^g$  on $D$ is a random distribution
defined by 
\[
  \langle \Xi^g_D, f\rangle \sim N\left(0, \|f\|^2_{H^1_0(D)} \right)
\]
for $f\in C^{\infty}_0(\bb R^d)$ with covariance  given by
\[
  \mathbb{E} [ \<\Xi^g_D,f\>  \< \Xi^g_D,g\>]
  =
  \int_D \int_D f(x)g(x) G_D(x,y) dx dy
  \label{eq:defGFF}
\]
where $f,g \in C^{\infty}_0(\bb R^d)$. The Green's function $G_D(x,y)$ is given
by the distributional solution of 
\begin{equation} \label{eq:defGD}
  \begin{cases}
      -\left(\Delta G_D(x,\cdot) \right)(y) = \delta(x-y) & \text{ if } y \in D\\
      \phantom{\naeps}G_D(x,y) = 0 & \text{ if } y \in \partial D,
  \end{cases}
\end{equation}
where $x \in D$, $\Delta$ is the Laplacian operator and $\delta$ is the
Dirac delta function. For $D=\bb T^d$ we substitute the assumption on the
boundary condition in \eqref{eq:defGD} by $\int_{\bb T^d} G_{\bb T^d}(x,y)dy=0$. 
\end{definition}

A common representation of the Green's function in spectral terms is the following: 
\begin{equation} \label{eq:RepGD}
  G_D(x,y)
  :=
  \sum_{k = 1}^{\infty}
  \frac{\phi_k(x) \phi_k(y)}{\lambda_k}
\;
	\text{ and }
\;
  G_{\bb T^d}(x,y)
  :=
  \sum_{k \in \bb Z^d \setminus \{0\}}
  \frac{\phi_k(x) \phi_k(y)}{\lambda_k}.
\end{equation}
Let us remark again that, in the case $D=\bb T^d$, we can ignore the atom at $0$-th Fourier coeffient because we only consider mean-zero test functions.
We will use a similar representation for the non-homogeneous case, as it will allow to quantify the distance between the fields we are interested in.
Note that for any given test function $f$ we can write 
\[
\Xi^g_D(f)=\sum_{k \neq 0}\widehat{\Xi}^g_D(k)\< f,\phi_k\>, 
\]
where $\widehat{\Xi}^g_D(k):= \<\Xi^g_D,\phi_k\>$.
Therefore, we can reduce $\Xi^{g}_D$ to the infinite-dimensional Gaussian vector $\{\hat{\Xi}^g_D(k)\}_{k\geq 1}$ which has the covariance function
\begin{equation}
  \mathbb{ E} [\hat{\Xi}^g_D(k)  \hat{\Xi}^g_D(k^\prime)]
  =
  \frac{\delta_{k,k^\prime}}{\lambda_k},
  \label{eq:RepGFF}
\end{equation}
with $\delta_{k,k^\prime}$ representing the Kronecker delta function, and $k, k'\in \mathbb{N}$.
Let $\xi$ denote the (spatial) white noise in $L^2(\bb R^d)$: 
\begin{align} \label{eq:WhiteNoise}
  \nonumber
	\xi : L^2(\bb R^d)&  \longrightarrow L^2(\Omega) \\
	f & \longmapsto \<\xi,f\> 
	\sim N\left( 0,\|f\|^2_{L^2(\bb R^d)} \right).
\end{align}

In the following definition  we follow the approach in \cite{Lodhia2016}.

\begin{definition}
The bi-Laplacian field $\Xi^b_D$ in $D\subset\bb R^d$ is the random distribution, solution of
\begin{equation} \label{eq:defBiLap}
  \begin{cases}
	(-\Delta)\Xi^b_D =  \xi \text{ in } D \\
	\phantom{(-\Delta)}
	\Xi^b_D = 0 \text{ in } D^c,
  \end{cases} 
\end{equation}
where $\xi$ denotes the white noise defined in \eqref{eq:WhiteNoise}.
\end{definition}
The definition above can be related to the so-called \emph{eigenvalue bi-Laplacian field}, see \cite{Lodhia2016}.
In Section~\ref{sec:conclusion}, we discuss the another definition of bi-Laplacian field in a domain and how to obtain similar results.

Let us remark  that, for $\widehat{\xi}(k):= 
\<\xi,\phi_k\>$,  we have that 
\begin{equation} \label{eq:RepBiLap}
	\Xi^b_D :=G_D * \xi
	= \sum_{k \neq 0} \lambda^{-1}_k \widehat{\xi}(k) \phi_k
\end{equation}
\noindent
satisfies the distributional equation \eqref{eq:defBiLap}.
Moreover, if $D \subset \bb R^d$ is a domain due to Weyl's law  which states that $\lambda_k \asymp_{D,d} k^{2/d}$ (see \cite[Appendix B]{lax2002functional} for a proof), we have that $\Xi^{b}_D$ belongs to $H^{\beta}_0(D)$ for all $\beta < -\frac{d}{4}+1$.
Similarly, we can prove that $\Xi^{g}_D$ belongs to $H^\beta_0(D)$ for all $\beta < -\frac{d}{4}+\frac{1}{2}$.
Notice that for sufficiently small dimensions,  the fields can be interpreted as functions in positive Sobolev spaces.

\begin{remark} \label{rem:BoundaryCondGreen} 
Using the definition of the bi-Laplacian field above, one can show that its covariance function is given by
\begin{equation*}
\tilde{G}_D(x,y) = G_D*G_D(x,y) := \int_D G_D(z,x)G_D(z,y) dz
\end{equation*}
which, for each fixed $x \in D$,  solves the equation
\begin{equation*}
\begin{cases}
  \Delta^2 \tilde{G}_D (x,y)= \delta(x-y) & \text{ if  } y \in D,\\
  \phantom{\Delta^2}\tilde{G}_D(x, y)= 0 & \text{ if } y \in \partial D, \\
  \Delta \phantom{^2}  \tilde{G}_D(x, y)= 0, & \text{ if } y \in \partial D.
\end{cases} 
\end{equation*}
Notice that, as $\Delta^2$ is a differential operator of order $4$, we need to define additional boundary conditions when compared to partial differential equations driven by the Laplacian operator, in order to make $G_D$ well-defined.
This is the so-called Green's function of the bi-Laplacian operator with Navier boundary conditions.
There are other sensible choices for the boundary conditions for the bi-Laplacian operator on a regular domain $D$.

One can then define the bi-Laplacian field by setting its covariance to be the Green's function of a bi-Laplacian operator with a given boundary condition (similar to Definition~\ref{def:homGFF}).
However, one needs to consider that this choice leads to different fields.

In contrast, on the torus, periodic boundary conditions are the only natural choice for the Green's function of the bi-Laplacian field (and therefore for associated field). 
\end{remark}

\subsubsection*{Gaussian fields in random media} 

\noindent
Let us introduce the random environment {\bf a} and present some assumptions.
We will follow the approach in \cite{Armstrong2019}.
The probability space of the environment will be denoted by $(\Omega(\Lambda), \tilde{\mc F}, {\bf P})$, which we will define in the sequel.

Call $\bb R^{d\times d}_{\sym}$ the set of real valued symmetric matrices.
We will be interested in maps ${\bf a}: \bb R^d \to \bb R^{d \times d}_{\sym}$ for which there is a constant $\Lambda>1$ such that
\begin{equation} \label{eq:UniElip}
	\|v\|^2 \le v \cdot \mathbf{a}(x) v \le \Lambda \|v\|^2,
	\forall v,x \in \bb R^d.
\end{equation}
We then define the space 
\begin{equation}
	\Omega(\Lambda):= \{ {\bf a} \in \bb R^{d\times d}_{\sym}: 
	{\bf a} \text{ Lebesgue measurable  and satisfying } \eqref{eq:UniElip}  \}.
	\label{eq:OmLamb}
\end{equation}

Fix $U \subset \bb R^d$, we will consider  $\mc F_U$, the $\sigma$-algebra generated by the maps 
\begin{equation*}
	{\bf a}\longmapsto \int_U {\bf a}_{ij}(x) f(x) dx
\end{equation*}
for $i,j \in\{1,\dots,d\}$ and $f \in C^\infty_c(U)$ a test function.
Moreover, we define $\tilde{\mathcal{F}} =\mc F_{\bb R^d} := \sigma \left(\bigcup_{U \subset \bb R^d} \mc F_U \right)$.
Let ${\bf P}$ be a probability distribution on $(\Omega(\Lambda),\tilde{\mc F})$ with the following properties:
\begin{enumerate}
	\item[(A1)] \textbf{Translation invariance}: For each $z \in \bb R^d$, we have ${\bf P} \circ (\tau_z)^{-1}= {\bf P}$ where $\tau_z {\bf a}(x):={\bf a}(x+z)$ is a translation.
	\item[(A2)] \textbf{Unit range of dependence}: $\mc F_U$ and $\mc F_V$ are ${ \bf P}$-independent for each pair $U,V \subset \bb R^d$ such that $d(U,V)\ge1$, $d(\cdot,\cdot)$ is the Hausdorff distance.
	\item[(A3)] \textbf{Isotropic in law}: For each $I: \bb R^d \longrightarrow \bb R^d$ isometry such that $I$ maps each of the coordinate axes to another, we have that ${\bf P} \circ I^{-1} = {\bf P}$.
\end{enumerate}
Let us remark that Assumptions $(A1)$ and $(A2)$ are standard in \cite{Armstrong2019}, but $(A2)$ can be relaxed to sufficiently fast decaying covariances.
Assumption $(A3)$ is a matter of convenience. In this case, the homogenized operator is given by a multiple of the Laplacian (instead of $\nah$ where $\ahom$ is a matrix).
In this case, the asymptotic behaviour of the $L^p(\mathbb{R}^d)$-norms of the eigenvectors of the limiting operator, $\phi_k$'s , are well understood.

\begin{definition} \label{def:GFFNHom}
  Let $(\Omega(\Lambda) \otimes \Omega, \tilde{\mathcal{F}}\otimes \mc F, {\bf
  P} \otimes {\bb P})$,  $D \subset \bb R^d$ a regular domain, and  ${\bf
  a} \in \Omega(\Lambda)$ fixed. Furthermore, let $G_D^{{\bf a}}(\cdot,\cdot)$ be the
  solution of
  \[
  \begin{cases}
    (-\nan G^{{\bf a}}_D(x, \cdot))(y)  = \delta(x-y)& \text{ if } y \in D\\
    \phantom{(-\nan} G^{{\bf a}}_D(x,y)\phantom{\cdot(y)}   = 0& \text{ if } y\in \partial D,
  \end{cases}
  \]
  for each $x \in D$. 
  We define 
  $\Xi^{g,{\bf a}}_{D}$ a \emph{non-homogeneous Gaussian free field} 
  as the Gaussian random field with mean 0 and covariance given by 
  \begin{equation} \label{eq:GFFNHom}
	  {\bb E}[ \<\Xi^{g,{\bf a}}_{D},f\>
	  \<\Xi^{g,{\bf a}}_{D},g\>  ]
	  = \int_D \int_D G_D^{\bf a}(x,y) f(x)g(y)dx dy,
  \end{equation}
  where $f,g\in C^{\infty}_c(\bb R^d)$. 
\end{definition}

\begin{definition} \label{def:BiLapNHom}
Let $(\Omega(\Lambda) \otimes \Omega, \tilde{\mathcal{F}}\otimes \mc F, {\bf P} \otimes {\bb P})$, $D \subset \bb R^d$ a regular domain, and ${\bf a} \in \Omega(\Lambda)$ fixed.
We define $\Xi^{b, \bf a}_D$ a \emph{non-homogeneous bi-Laplacian} as the distributional solution to
  \begin{equation} \label{eq:BiLapNHom}
    \begin{cases}
      -	\nan \Xi^{b, {\bf a}}_D
	  =
	  \xi \text{ in } D. \\
\phantom{\;\;\;\;\nan} 
	 \Xi^{b, \bf a}_D = 0 \text{ in } D^c.
    \end{cases} 
  \end{equation}
\end{definition}

For a given random environment ${\bf a} \in \Omega(\Lambda)$, stochastic homogenization techniques will allow determining  limiting fields of $\Xi^{g, {\varepsilon}}_D:=\Xi^{g, {\bf a}_{\varepsilon}}_D$ resp.
$\Xi^{b, {\varepsilon}}_D:=\Xi^{b, {\bf a}_{\varepsilon}}_D$ where ${\bf a}_{\varepsilon}(x) :={\bf a}(x/\varepsilon)$ and $x \in D$.

The main idea of this article is to obtain a good representation for the non-homogeneous fields not in terms of the eigenfunctions of $\naeps$ but instead, in terms of the eigenfunctions of the liming operator.
 For this, we will need to introduce a sequence of functions that we call the \emph{pseudo-eigenfunctions} $\{\varphi_k^{\varepsilon}\}_{k\in \mathbb{N}}$,  solutions of
\begin{equation}
	\label{eq:NHPDEBasis}
	\begin{cases}
		-\naeps \varphi^{\varepsilon}_k = \lambda_k \ahom \phi_k & \text{ in } D\\
	\phantom{\,\,\;\;\;\;\nan} 
		\varphi^{\varepsilon}_k= 0 & \text{ in } \partial D.
	\end{cases}
\end{equation}
The deterministic constant $\ahom$ is positive and has a special meaning in stochastic homogenization where it is referred to as \emph{effective/homogenized coefficient}, see Remark~\ref{rem:effective} for more details.

In particular, we will show that the non-homogeneous GFF $\Xi_D^{g, \varepsilon}$ can be written as a mean zero infinite Gaussian vector
\begin{equation}\label{eq:repGFFeps}
  \Xi_D^{g,{\varepsilon}}= \sum_{k \neq 0} \hat{\Xi}_D^{g,{\varepsilon}}(k) \phi_k
\end{equation}
where $\hat{\Xi}_D^{g,{\varepsilon}}(k):=\< \Xi_D^{g},\varphi^{\varepsilon}_k\>$, and has covariance structure
\begin{equation} \label{eq:RepCovGffEps}
  {\bb E} [ \hat{\Xi}_D^{g,{\varepsilon}}(k)\hat{\Xi}_D^{g,{\varepsilon}}(k')]
  =
  \frac{\<\phi_k,\varphi^\varepsilon_{k^\prime}\>}{\ahom \lambda_{k^\prime}},
\end{equation}
for $k,k^\prime \neq 0$. 
Notice that as $\nabla \cdot {\bf a}_{\varepsilon} \nabla$ is self-adjoint, we have for all $k,k' \neq 0$
\begin{equation}\label{eq:pseudo}
  \langle \phi_k, \varphi^{\varepsilon}_{k'} \rangle =
  \frac{\lambda_{k'}}{\lambda_k} 
  \langle \varphi^{\varepsilon}_{k},\phi_{k'} \rangle
  \, \, \text{ and } 
  \langle \naeps
  \phi_k, \varphi^{\varepsilon}_{k'} \rangle =\delta_{k,k'}.
\end{equation}

Similarly, for the non-homogeneous bi-Laplacian field
\begin{equation}\label{eq:RepXieps}
    \Xi^{b, \varepsilon}_D 
    =
    \sum_{k \neq 0} 
    (\bar{{\bf a}}\lambda_k)^{-1} \hat{\xi}^{\varepsilon}(k) \phi_k
\end{equation}
where $\hat{\xi}^{\varepsilon}(k)=\langle \xi, \varphi^{\varepsilon}_k\rangle$ and $\varphi_k^{\varepsilon}$ is defined in \eqref{eq:NHPDEBasis}.

\begin{remark}[The effective coefficient $\ahom$ ]\label{rem:effective} 
The effective coefficient $\ahom$ only depends on the law of the environment ${\bf P}$.
In particular in \cite[Equation 3.92]{Armstrong2019}, the authors provide an explicit formula for $\ahom$ in terms of the expected energy and flux of the first-order correctors $\Phi$.
That is, for any unit vector $e\in \mathbb{R}^d$:
\begin{equation}\label{def:a}
  \ahom 
  =
  {\bf E} \left[ \int_{[0,1]^d}
  \frac{1}{2}(e+\nabla \Phi_e(x))\cdot {\bf a}(e+\nabla \Phi_e(x)) dx \right]
\end{equation}
and the first-order corrector $\Phi_e$ is the difference between the solution of the Dirichlet problem and an affine function defined in \cite[Section 3.4]{Armstrong2019}.
A similar characterisation holds in the discrete setting (which we introduce in the next section).
For more information see for instance \cite{gloria2011optimal}.
\end{remark}
\subsubsection{Gaussian fields in discrete spaces}
\label{sec:nonh-gauss-discrete}

\subsubsection*{Homogeneous fields}
\noindent
Let $\Delta_N$ be the normalized graph 
Laplacian of $\bb T^d_N$ (the discrete torus defined in Section~\ref{sec:disspa}), that is
\begin{equation}\label{eq:discLap}
  \Delta_N f (\hat{x})
  =
  N^2
  \sum_{ \hat{y}; \hat{y}  \sim\hat{x}}
  (f(\hat{y})-f(\hat{x})),
\end{equation}
where $\hat{y}\sim \hat{x}$ denotes that $\hat{x}$ and $\hat{y}$ 
are nearest neighbours in $\bb T^d_N$. Note that we will use the same definition of the normalized discrete Laplacian as \cite{gloria2014optimal}.
In this case, we take the Green's function as the solution of 
\begin{align*}
  \begin{cases}
    (-\Delta_N) G^N(\hat{x},\cdot)(\hat{y}) 
    =
    \delta_{\hat{x},\hat{y}}- \frac{1}{N^d} & \text{ in }  \bb T^d_N \\
    \sum_{\hat{y} \in \bb T^d_N} G^N(\hat{x},\hat{y}) \phantom{di} = 0.
  \end{cases} 
\end{align*}
It will be particularly useful for us 
that $\{\phi_k^N\}_{k \in \bb Z^d_N}$  is also a basis
of eigenfunctions of $\Delta_N$, remember that $f^N:=f|_{\bb T^d_N}$.
Moreover, we have that 
the Green's function of the discrete torus $\bb T^d_N$ can 
be written as 
\begin{equation} \label{eq:defGdTorus} 
  G^N( \hat{x}, \hat{y})
  :=
\frac{1}{2d}  \frac{1}{N^{d}}
  \sum_{k \in \bb Z^d_N \setminus\{0\}}
 \frac{\phi_k(\hat{x}-\hat{y})}{\lambda^{(N)}_k},
\end{equation}
where $\lambda^{(N)}_k$ is the eigenvalue of $\Delta_N$ associated with
$\phi^N_k$.

Before defining the discrete versions of Gaussian fields in random environments, we remind the reader of the definitions of the discrete GFF and the discrete bi-Laplacian field in a discrete torus.

\begin{definition}\label{def:homGFF}
The discrete Gaussian free field on the torus is a multivariate Gaussian vector $(\Xi^g_N(\hat{x}))_{\hat{x} \in \bb T^d_N}$ with mean zero and covariance ${\bb E}[\Xi^g_N(\hat{x})\Xi^g_N(\hat{y})]=G^N(\hat{x},\hat{y})$, where $G^N$ is defined in \eqref{eq:defGdTorus}.
\end{definition}

\begin{definition}
Let $(\xi_N(\hat{x}))_{\hat{x} \in \bb T^d_N}$ be a collection of i.i.d random variables with distribution $N(0,1)$.
The discrete bi-Laplacian field $(\Xi^b_N(\hat{x}))_{\hat{x} \in \bb T^d_N}$ is the solution of the finite difference equation
\begin{equation}\label{eq:defEtaN}
 \begin{cases}
   (- \Delta_N) \Xi^b_N(\hat{x}) \, 
   =
   \xi_N(\hat{x}) - (\xi)_N& \text{ in }  \bb T^d_N \\
   \sum_{\hat{x} \in \bb T^d_N} \Xi^b_N(\hat{x}) =0.
 \end{cases}  
\end{equation}
where $(\xi)_N:=\frac{1}{N^d}\sum_{\hat{x}\in \bb T^d_N} \xi_N(\hat{x})$ is the spatial average of $\xi_N$ on $\bb T^d_N$. 
\end{definition}

\subsubsection*{Gaussian fields with random conductances}

\noindent
In this context, we will use the stochastic homogenization bounds from \cite{gloria2014optimal} instead of \cite{Armstrong2019}.
We will keep some of the same notation used in the continuous case in order to make a clear analogy between the two.
The underlying probability space will be denoted by $(\Omega^{\dis}(\Lambda), \tilde{\mc F}, {\bf P})$.

Consider the Euclidean lattice graph given by $(\bb Z^d, E^d)$, where $\{x,y\}
\in E^d$ if $\|x-y\|_1=1$. Denote the measurable space given by
$(\Omega^{\dis}(\Lambda), \tilde{\mc F})$, where
\[
  \Omega^{\dis}(\Lambda)
  :=
  \{{\bf a}=(\mathbf{a}_e)_{e \in E^d}:1 < \mathbf{a}_e\le \Lambda \}
\]
for some arbitrary constant $\Lambda >1$, and $\tilde{\mc F}$ is the product $\sigma$-field indexed by the edges.  

\begin{remark}
Notice that in this discrete context, we are associating the random environment ${\bf a}$ to each individual edge $e \in E^d$, rather than defining a matrix-valued function (as in the continuous case).

Given an element ${\bf a} \in \Omega^{\dis}(\Lambda)$, we can recover a $2d \times 2d$ matrix-valued function that gives weights to each of the $2d$ discrete differences of a function $f$.
However, it is more convenient to simply index the environment ${\bf a}$ via the edges, particularly when we want to define the projections $\Pi_{N}$ below.

Notice that because edges $e \in E^d$ are always parallel to the axes, we never see mixed derivatives - which are allowed in continuous stochastic homogenization.
The hypothesis ${\bf a}_e \in (1,\Lambda]$ is a discrete version of uniform ellipticity and continuity  assumption, cf. \eqref{eq:UniElip}.

Product measures on $\Omega^{\dis}(\Lambda)$ should be seen as natural examples of measures satisfying assumptions (A1)-(A3) from the continuous case.
In fact, we believe our results should also hold when substituting the product measures with measures which are invariant by translations and rotations, provided that their covariances decay fast enough.
\end{remark}

Similarly to the infinite case, we define $(\Omega^{\dis}_N(\Lambda),\tilde{\mc F}_N)$ by substituting $\bb Z^d$ with $\bb Z^d_N$ and $E^d$ with $E_N$.
Let $\bf P$ be any product probability measure on $\Omega^{\dis}(\Lambda)$.

We define a projection operator  $\Pi_N: \Omega^{\dis}(\Lambda) \longrightarrow
\Omega^{\dis}_N(\Lambda)$ 
by
\[
  (\Pi_N \mathbf{a})_{e}
  :=
  \begin{cases}
    \mathbf{a}_{\{x,x+e_i\}}
    & \text{ if } e = \{x,x+e_i\} \in E^d_N, i \in \{1,\dots, d\} \\
    \mathbf{a}_{\{x,x+e_i\}}
    & \text{ if } e = \{x,x-N e_i\}\in E^d_N, i \in \{1,\dots, d\},
  \end{cases}
\]
where $(e_i)_{i=1}^d$ is the canonical basis of $\bb R^d$, 
$\mathbf{a} \in \Omega^{\dis}(\Lambda)$ and $e \in E^d_N$.

On the other hand, we define an extension operator from
$\Omega^{\dis}_N(\Lambda)$ to $\Omega^{\dis}(\Lambda)$ for $\mathbf{a}\in
\Omega_N^{\dis}(\Lambda)$ and $e = \{x,y\} \in E^d$ as
\[
  (\ext_N {\mathbf{a}})_e
  := {\mathbf{a}}_{e^*},
\]
where $e^*$ is the only edge in $E^d_N$ such that $e^* = \{x^*,y^*\}$ with $x^* \equiv x \mod \bb Z^d_N$ and $y^* \equiv y \mod \bb Z^d_N$.
For $\mathbf{a} \in \Omega^{\dis}(\Lambda)$, we write $\mathbf{a}_N := \ext_N \circ \Pi_N(\mathbf{a})$.
Notice that,  $\Pi_N \circ \ext_N$ is the identity in $\Omega^{\dis}_N(\Lambda)$ and $\ext_N \circ \Pi_N(\mathbf{a})_e = \mathbf{a}_e$ for all $\mathbf{a} \in \Omega^{\dis}(\Lambda)$ whenever $e = \{x,y\}$ with $x,y \in \bb Z^d_N$.
Let ${\bf P}_N$ be an abbreviation for ${\bf P}(\Pi_N^{-1} \cdot)$, which is a product measure in $\bb Z^d_N$.

Finally, for any $f: \bb T^d_N \longrightarrow \bb R$ and
any $\mathbf{a} \in \Omega^{\dis}(\Lambda)$, we can write

\begin{equation}\label{eq:def-random-operator-discrete}
	-\naN  f (\hat{x})
	:= N^2 \sum_{\hat{y}:\hat{y} \sim \hat{x} } \mathbf{a}_{N, \{\hat{x},\hat{y}\}} (f(\hat{x})-f(\hat{y})).
\end{equation}

\begin{definition}\label{def:gn}
Let $(\Omega^{\dis}(\Lambda) \otimes \Omega, \tilde{\mathcal{F}}\otimes \mc F,
\bf P \otimes {\bb P})$.
We define the discrete non-homogeneous Gaussian free field to be the Gaussian
vector $(\Xi_N^{g,{\bf a}}(\hat{x}))_{\hat{x} \in \bb T^d_N}$  with mean $0$ and
covariance
\[
  {\bb E}[\Xi_N^{g,{\bf a}}(\hat{x})\Xi_N^{g,{\bf a}}(\hat{y})]
  :=
  G^{N,{\bf a}}(\hat{x},\hat{y}),
\]
where  $G^{N,{\bf a}}: \bb T^d_N \times \bb T^d_N \longrightarrow \bb R$ is the
unique solution of
\begin{align} \label{def-Green-function}
  \begin{cases}
    (-\nabla \mathbf{a}_N \nabla G^{N,{\bf a}} (\cdot,\hat{y}))(\hat{x})
    =
    \delta^N_{\hat{x},\hat{y}}- \frac{1}{N^d}, & \hat{x} \in \bb T^d_N  \\
	\phantom{\;\;\,} 
    \sum_{\hat{x} \in \bb T^d_N}G^{N,{\bf a}}(\hat{x},\hat{y}) \;\;\;\;\;= 0,
  \end{cases} 
\end{align}
and $\delta^N_{\hat{x},\hat{y}}= \sum_{{z} \in \bb Z^d}
\delta_{N\hat{x},N\hat{y}+z}$, with $\delta_{\hat{x},\hat{y}}$ being the
standard Kronecker delta function.

\end{definition}
Call $(\xi_N(\hat{x}))_{\hat{x}\in  \bb T^d_N}$  the i.i.d collection of
standard normal random variables. 

\begin{definition}\label{def:bn}
Let $(\Omega^{\dis}(\Lambda) \otimes \Omega, \tilde{\mathcal{F}}\otimes \mc F,
\bf P \otimes {\bb P})$. The collection of Gaussian random variables 
$(\Xi_N^{b,{\bf a}}(\hat{x}))_{\hat{x} \in \bb T^d_N}$  
satisfying
\begin{equation}\label{eq:discBiLap}
  \begin{cases}
    -\naN \Xi_N^{b,{\bf a}}(\hat{x})
    =
    \xi_N(\hat{x}) - (\xi)_N, & x \in  \bb T^d_N \\
	\phantom{\;\;\;\;\;\;\;\;\,} 
    \sum_{\hat{x}\in \bb T^d_N} \Xi_N^{b,{\bf a}}(\hat{x})=0
  \end{cases} 
\end{equation}
is called \emph{discrete non-homogeneous bi-Laplacian field}. Again, we are using
the notation $(\xi)_N:=\frac{1}{N^d}\sum_{\hat{x}\in \bb T^d_N} \xi_N(\hat{x})$.

\end{definition}

The formal field on $D=\bb T^d$ as for $i\in \{b,g\}$ is defined by
\begin{equation}\label{def:formalField}
  \Xi^{i, {\bf a}}_{D,N} 
  :=
 \frac{c_i}{N^{d/2}} \sum_{\hat{z}\in \bb T_N^d} \Xi^{i, {\bf a}}_N(\hat{z}) \delta_{\hat{z}}
\end{equation}
where $\Xi^{g, {\bf a}}_N$ is defined in Definition \ref{def:gn} and $\Xi^{b,
{\bf a}}_N$ in Definition \ref{def:bn} and $c_g=(2d)^{-1/2}$ resp. $c_b=(2d)^{-1}$.

\begin{remark} \label{rem:abuse} 
	In \eqref{def:formalField}, there is a clear abuse of notation as we are using $\Xi^{i,{\bf a}}_N(\cdot)$ to denote both a random vector (which has well-defined values for every choice of $\hat{z}$) and a distribution given by the sum of delta functions (and therefore has no well-defined notion of value at a given point).
	We chose to do so to simplify the notation.
\end{remark} 

\begin{figure}[!htb]
  \begin{subfigure}[b]{0.5\linewidth}
    \centering
    \includegraphics[width=0.7\linewidth]{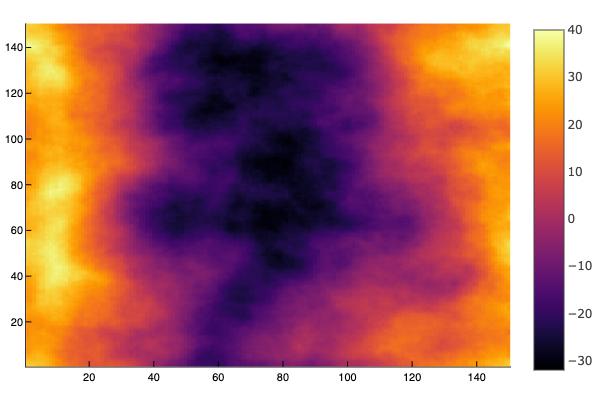}
    \caption{${\bf a}_N(e)\equiv 3/2$, constant environment }
    \vspace{4ex}
  \end{subfigure}
  \begin{subfigure}[b]{0.5\linewidth}
    \centering
    \includegraphics[width=0.7\linewidth]{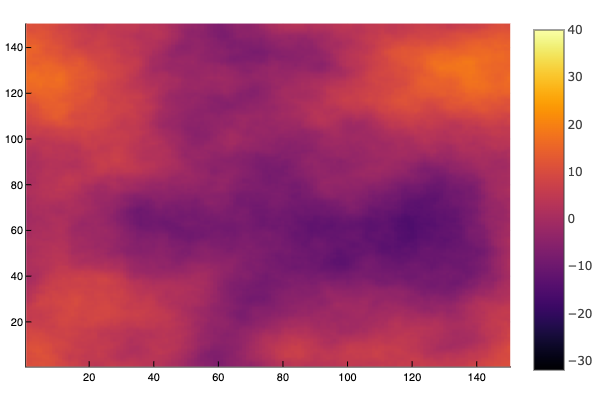}
     \caption{${\bf a}_N(e) \sim Unif(1,2)$}
    \vspace{4ex}
  \end{subfigure}
  \begin{subfigure}[b]{0.5\linewidth}
    \centering
    \includegraphics[width=0.7\linewidth]{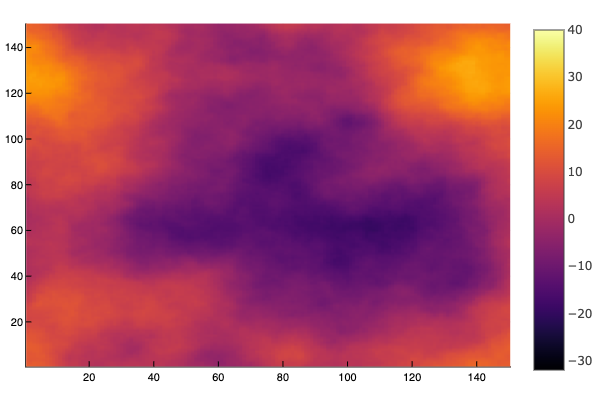}
    \caption{${\bf a}_N(e) \sim 1+Ber(0.5)$}
  \end{subfigure}
  \begin{subfigure}[b]{0.5\linewidth}
    \centering
    \includegraphics[width=0.7\linewidth]{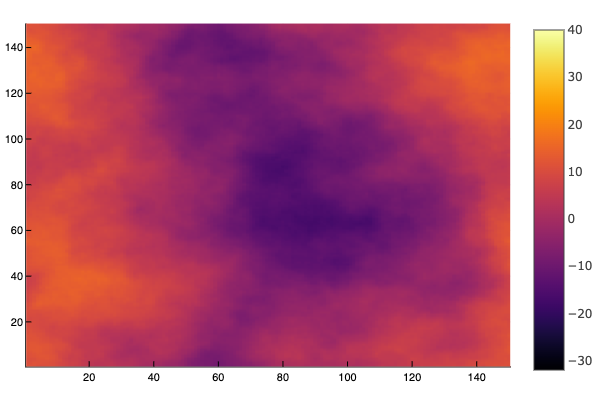}
    \caption{${\bf a}_N(e) \sim 1+Ber(0.5)$}
  \end{subfigure}
  \caption{Heat map simulations of the discrete bi-Laplacian fields  for $N=150$
    samples with  the same noise but different environments ${\bf a}_N$. 
    There is a significant change in local features across the
    different simulations, even between (c) and (d), which are sampled according
    to the same law. However, the location of the maxima and minima
    is consistent through out all samples.
  }
  \label{Fig1}
\end{figure}

\section{Main results}\label{sec:results}

\begin{theorem}\label{thm:ConvCont}
    Let $D \subset \bb R^d$ be a regular domain,
    $(\Omega(\Lambda) \otimes \Omega, \tilde{\mathcal{F}}\otimes \mc F, \bf
    P \otimes {\bb P})$ the underlying probability space, where ${\bf P}$ satisfies Assumptions (A1)-(A3). Then
    there exists a positive deterministic constant $\ahom$ such that the
    following statements hold.
    \begin{enumerate}
    \item  The non-homogeneous GFF defined in \eqref{eq:repGFFeps} converges
    \[
        \Xi^{g,\varepsilon}_D \overset{d}\longrightarrow \bar{{\bf a}}^{-1/2} \Xi^g_D,
    \]
      as $\varepsilon \to 0$    in $H^{-\beta}_0(D)$, with $\beta>\frac{d}{4}$.
    \item The non-homogeneous bi-Laplacian field defined in \eqref{eq:RepXieps}
      converges
      \[
        \Xi^{b,\varepsilon}_D \overset{{\bf P}\otimes {\bb P}}\longrightarrow \bar{{\bf a}}^{-1} \Xi^b_D,
      \]
      as $\varepsilon \to 0$    in $H^{-\beta}_0(D)$, with
      $\beta>\frac{d}{4}-\frac{1}{2}$.
    \end{enumerate}
\end{theorem}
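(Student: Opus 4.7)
The plan is to exploit the fact that, conditionally on the environment $\mathbf{a}$, both $\Xi^{g,\varepsilon}_D$ and $\Xi^{b,\varepsilon}_D$ are Gaussian with explicit spectral representations \eqref{eq:repGFFeps} and \eqref{eq:RepXieps} in the eigenbasis $\{\phi_k\}$ of the Laplacian, the $\varepsilon$-dependence being encoded in the pseudo-eigenfunctions $\{\varphi^\varepsilon_k\}$ solving \eqref{eq:NHPDEBasis}. Since the homogenized version of \eqref{eq:NHPDEBasis} reads $-\ahom \Delta \bar{\varphi}_k = \ahom \lambda_k \phi_k$ with zero Dirichlet data, whose unique solution is $\bar{\varphi}_k = \phi_k$, quantitative stochastic homogenization (Theorem~\ref{thm:AKM}, packaged as Lemma~\ref{lem:growthError}) supplies a rate $\varphi^\varepsilon_k \to \phi_k$, which is the single analytic input both parts of the theorem will rely on. What remains is to propagate this rate through the spectral sums defining the $H^{-\beta}_0(D)$-norm.

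For part (1), I would first argue termwise. Using \eqref{eq:RepCovGffEps}, for any fixed indices $k, k'$ one has, in ${\bf P}$-probability,
\begin{equation*}
  {\bb E}\bigl[\hat{\Xi}^{g,\varepsilon}_D(k)\,\hat{\Xi}^{g,\varepsilon}_D(k')\bigr]
  = \frac{\langle \phi_k,\,\varphi^\varepsilon_{k'} \rangle}{\ahom \lambda_{k'}}
  \xrightarrow[\varepsilon \to 0]{} \frac{\delta_{k,k'}}{\ahom \lambda_k},
\end{equation*}
which coincides with the covariance of the Fourier coefficients of $\ahom^{-1/2}\Xi^g_D$; Gaussianity then delivers convergence of all finite-dimensional marginals. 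To upgrade this to convergence in distribution in $H^{-\beta}_0(D)$, I would establish the tightness estimate
\begin{equation*}
  \sup_{\varepsilon \in (0,1)} \bigl({\bf E}\otimes{\bb E}\bigr)\|\Xi^{g,\varepsilon}_D\|^2_{H^{-\beta}_0(D)}
  = \sup_{\varepsilon} \sum_{k \neq 0} \lambda_k^{-2\beta}\,\frac{{\bf E}\langle \phi_k,\varphi^\varepsilon_k \rangle}{\ahom \lambda_k} < \infty,
\end{equation*}
which holds for $\beta > d/4$ by Weyl's law $\lambda_k \asymp k^{2/d}$ together with the uniform bound $|\langle \phi_k,\varphi^\varepsilon_k\rangle| \lesssim 1$ obtained by testing \eqref{eq:NHPDEBasis} against $\varphi^\varepsilon_k$ and invoking uniform ellipticity \eqref{eq:UniElip}. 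Standard tightness of Gaussian measures in a separable Hilbert space then closes the argument.

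For part (2), the key simplification is that $\Xi^{b,\varepsilon}_D$ and $\Xi^b_D$ are driven by the \emph{same} white noise $\xi$, so their difference is the explicit random variable
\begin{equation*}
  \Xi^{b,\varepsilon}_D - \ahom^{-1}\,\Xi^b_D
  = \sum_{k \neq 0} \frac{\langle \xi,\, \varphi^\varepsilon_k - \phi_k\rangle}{\ahom \lambda_k}\,\phi_k.
\end{equation*}
Conditioning on $\mathbf{a}$ and using that $\xi$ is isonormal on $L^2(D)$ yields
\begin{equation*}
  {\bb E}\|\Xi^{b,\varepsilon}_D - \ahom^{-1}\Xi^b_D\|^2_{H^{-\beta}_0(D)}
  = \frac{1}{\ahom^2}\sum_{k \neq 0} \lambda_k^{-2\beta-2}\,\|\varphi^\varepsilon_k - \phi_k\|^2_{L^2(D)},
\end{equation*}
so Markov's inequality reduces convergence in ${\bf P}\otimes{\bb P}$-probability to showing that this series vanishes in ${\bf P}$-probability. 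This is the main obstacle: Lemma~\ref{lem:growthError} yields a bound of the form $\|\varphi^\varepsilon_k - \phi_k\|_{L^2(D)} \lesssim \varepsilon^\alpha\,\mathcal{C}_k(\mathbf{a})$, but the constant $\mathcal{C}_k(\mathbf{a})$ grows polynomially in $k$ because it depends on a Sobolev norm of $\phi_k$, itself a power of $\lambda_k$. The delicate step is to arrange for the decay $\lambda_k^{-2\beta-2}$, available precisely when $\beta > d/4 - 1/2$, to dominate this growth in $k$; once a uniform $k$-envelope with finite ${\bf P}$-moments is established, dominated convergence closes the argument.
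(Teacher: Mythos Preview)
Your proposal is correct and follows essentially the same route as the paper. Part~(2) is virtually identical to the paper's argument: both compute ${\bb E}\|\Xi^{b,\varepsilon}_D-\ahom^{-1}\Xi^b_D\|^2_{H^{-\beta}_0(D)}=\ahom^{-2}\sum_k\lambda_k^{-2\beta-2}\|\varphi^\varepsilon_k-\phi_k\|^2_{L^2}$ and invoke Lemma~\ref{lem:growthError}. One sharpening of your description: the bound from that lemma has the factored form $\|\varphi^\varepsilon_k-\phi_k\|_{L^2}\lesssim \mc X_\varepsilon\,\lambda_k^{(1+\alpha)/2+\kappa}$, where the random variable $\mc X_\varepsilon$ is \emph{independent of $k$} and the $k$-growth is deterministic. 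This factorization is precisely what makes the ``obstacle'' you flag disappear: the sum over $k$ is a finite constant once $\beta>d/4-1/2$ and $\alpha,\kappa$ are chosen small, leaving only $\mc X_\varepsilon^2\to0$; no dominated-convergence step is needed.

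For Part~(1) the paper instead uses Billingsley's triangulation theorem (Theorem~\ref{thm:Billingsley}) with truncated fields, controlling the tail $\sum_{k>K}$ via Lemma~\ref{lem:growthError}; this is where the constraint $\beta>d/4$ enters in the paper. Your direct tightness route is equivalent but arguably cleaner: the energy estimate $0\le\langle\phi_k,\varphi^\varepsilon_k\rangle\le\ahom$ does hold deterministically (test \eqref{eq:NHPDEBasis} against $\varphi^\varepsilon_k$, write $\langle\phi_k,\varphi^\varepsilon_k\rangle=\lambda_k^{-1}\langle\nabla\phi_k,\nabla\varphi^\varepsilon_k\rangle$, and combine with $\|\nabla\phi_k\|_{L^2}=\lambda_k^{1/2}$), and this avoids appealing to $\mc X_\varepsilon$ for tightness altogether. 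One technical caveat: a uniform bound on ${\bf E}\otimes{\bb E}\,\|\cdot\|^2_{H^{-\beta}_0}$ alone does not give tightness in $H^{-\beta}_0$; you should establish the bound in $H^{-\beta_0}_0$ for some $\beta_0\in(d/4,\beta)$ and use the compact embedding $H^{-\beta_0}_0\hookrightarrow H^{-\beta}_0$.
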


\begin{theorem}\label{thm:ConvDisc}
   Let $(\Omega(\Lambda) \otimes \Omega, \tilde{\mathcal{F}}\otimes \mc F, \bf P
   \otimes {\bb P})$ where ${\bf P}$ is a product measure. Then there exists a
   positive deterministic constant $\ahom$ such that (1) and (2) hold.
   \begin{enumerate}
    \item The discrete non-homogeneous GFF defined in \eqref{def:formalField}
      satisfies
    \[
        \Xi^{g, {\bf a}}_{D,N} \overset{d}\longrightarrow 
	\bar{{\bf a}}^{-1/2} \Xi^g_{\bb T^d}
      \]
    as $N \to \infty$.
	The convergence holds in $H^{-\beta}_0(D)$, $\beta >\frac{d}{4}$.
\item There exists a coupling between $\xi$ and the sequence $\xi_N$, such that the discrete non-homogeneous bi-Laplacian field defined in \eqref{def:formalField} satisfies
      \[
        \Xi^{b, {\bf a}}_{D,N} \overset{\bf P \otimes {\bb P}}\longrightarrow 
	\bar{{\bf a}}^{-1} \Xi^b_{\bb T^d},
      \]
      as $N \to \infty$. The convergence holds in $H^{-\beta}_0(D)$, $\beta
      > \frac{d}{4}-\frac{1}{2}$.
      \end{enumerate}
\end{theorem}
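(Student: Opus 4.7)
The strategy parallels the proof of Theorem~\ref{thm:ConvCont}, with the continuum homogenization estimates of \cite{Armstrong2019} replaced by their discrete analogues from \cite{gloria2014optimal}. The central auxiliary object is a family of \emph{discrete pseudo-eigenfunctions}: for each $k \in \bb Z^d_N \setminus \{0\}$ let $\varphi^{N,{\bf a}}_k : \bb T^d_N \to \bb C$ be the mean-zero solution of
\begin{equation*}
  -\nabla \cdot {\bf a}_N \nabla \varphi^{N,{\bf a}}_k
  = \ahom\, \lambda^{(N)}_k\, \phi^N_k \qquad \text{on } \bb T^d_N.
\end{equation*}
Since $-\nabla \cdot {\bf a}_N \nabla$ is self-adjoint with respect to $(\cdot,\cdot)_{\ell^2(\bb T^d_N)}$, the algebraic manipulation behind \eqref{eq:RepCovGffEps} provides a representation of $G^{N,{\bf a}}$ in terms of the basis $\{\phi^N_k\}$ and the $\varphi^{N,{\bf a}}_k$, which in turn lets me express every Fourier coefficient $\langle \Xi^{g,{\bf a}}_{D,N},\phi_k\rangle$ and $\langle \Xi^{b,{\bf a}}_{D,N},\phi_k\rangle$ against a continuum mode $\phi_k$, $k \in \bb Z^d \setminus \{0\}$, solely in terms of the inner products $(\varphi^{N,{\bf a}}_k,\phi^N_{k'})$, the eigenvalues $\lambda^{(N)}_k$, and (for the bi-Laplacian) the noise coefficients $\hat{\xi}_N(k)$.

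For part (1), Gaussianity reduces convergence in distribution in $H^{-\beta}_0(\bb T^d)$ to (a) convergence of the covariance structure on each pair of Fourier modes and (b) tightness. For (a), I would apply the discrete quantitative homogenization estimate of \cite{gloria2014optimal} to the elliptic equation above with smooth source $\ahom\lambda^{(N)}_k \phi^N_k$; this, together with the elementary convergence $\lambda^{(N)}_k \to \lambda_k$ extracted from the explicit trigonometric formula for the eigenvalues of $\Delta_N$, yields
\begin{equation*}
  \bb E\bigl[\langle \Xi^{g,{\bf a}}_{D,N},\phi_k\rangle \overline{\langle \Xi^{g,{\bf a}}_{D,N},\phi_{k'}\rangle}\bigr] \longrightarrow \frac{\delta_{k,k'}}{\ahom\, \lambda_k}
\end{equation*}
for each fixed pair $k,k' \in \bb Z^d \setminus \{0\}$, which is the Gaussian covariance of $\ahom^{-1/2}\Xi^g_{\bb T^d}$. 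For part (2), I would first construct the required coupling by setting $\xi_N(\hat{z}) := N^{d/2}\langle \xi, \mathbf{1}_{C_{\hat{z}}}\rangle$, where $C_{\hat{z}} \subset \bb T^d$ is the cube of side $1/N$ centred at $\hat{z}$; this gives the claimed i.i.d.\ standard Gaussian family and ensures $\hat{\xi}_N(k) \to \hat{\xi}(k)$ in $L^2(\bb P)$ for each fixed $k$. Combined with the pseudo-eigenfunction estimate from \cite{gloria2014optimal}, this yields $L^2({\bf P} \otimes \bb P)$-, and hence in-probability, convergence of every Fourier coefficient of $\Xi^{b,{\bf a}}_{D,N}$ to the corresponding coefficient of $\ahom^{-1}\Xi^b_{\bb T^d}$.

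Tightness in $H^{-\beta}_0(\bb T^d)$ comes from a deterministic estimate that is insensitive to the random environment. The assumption $1 < {\bf a}_e \le \Lambda$ yields the operator inequality $-\Delta_N \le -\nabla \cdot {\bf a}_N \nabla \le \Lambda(-\Delta_N)$ on mean-zero functions on $\bb T^d_N$, so that
\begin{equation*}
  \bb E|\langle \Xi^{g,{\bf a}}_{D,N},\phi_k\rangle|^2 \lesssim \lambda_k^{-1}, \qquad \bb E|\langle \Xi^{b,{\bf a}}_{D,N},\phi_k\rangle|^2 \lesssim \lambda_k^{-2},
\end{equation*}
uniformly in $N$ and ${\bf a}$. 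Combined with Weyl's asymptotic $\lambda_k \asymp |k|^2$, these give summability in the $H^{-\beta}_0$ norm for $\beta > d/4$ (respectively $\beta > d/4 - 1/2$), producing the required tail control for truncations of the Fourier series.

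The main obstacle will be controlling the mode-by-mode homogenization error uniformly across the full discrete range $|k|_\infty \le N/2$. The bound of \cite{gloria2014optimal} involves Sobolev norms of the source, and for $\phi^N_k$ these norms grow polynomially in $|k|$, so a naive application cannot reach the high-frequency regime. The proof will therefore split the Fourier sum at some cutoff $K = K(N) \to \infty$: below the cutoff, mode-by-mode convergence from \cite{gloria2014optimal} applies directly; above it, the uniform-ellipticity bound of the previous paragraph controls both the discrete fields and the limits simultaneously. Choosing the cutoff to balance the two contributions, and accounting for the $O(|k|^2/N^2)$ discrepancy between $\lambda^{(N)}_k$ and $\lambda_k$ across this cutoff, is the most delicate aspect of the argument.
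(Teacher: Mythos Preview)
Your strategy is close to the paper's: both use the discrete pseudo-eigenfunctions $\varphi^{N,{\bf a}}_k$, the resulting Green's function representation, the coupling $\xi_N(\hat z)=N^{d/2}\langle \xi,\mathbf 1_{C_{\hat z}}\rangle$, and the estimate of \cite{gloria2014optimal}. The difference lies in how the high-frequency tail is handled.

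There is a genuine gap in your tightness argument for the bi-Laplacian. From the form inequality $-\Delta_N\le -\nabla\cdot{\bf a}_N\nabla\le \Lambda(-\Delta_N)$ you correctly obtain $G^{N,{\bf a}}\le G^N$ as positive operators, which gives the GFF bound $\bb E|\langle\Xi^{g,{\bf a}}_{D,N},\phi_k\rangle|^2=\langle\phi_k,G^{N,{\bf a}}\phi_k\rangle\le (\lambda_k^{(N)})^{-1}$. But the bi-Laplacian variance is $\|G^{N,{\bf a}}\phi_k\|_{\ell^2}^2=\langle\phi_k,(G^{N,{\bf a}})^2\phi_k\rangle$, and the map $A\mapsto A^2$ is \emph{not} operator monotone, so $(G^{N,{\bf a}})^2\le (G^N)^2$ does not follow. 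What you can extract deterministically is only
\[
\|G^{N,{\bf a}}\phi_k\|_{\ell^2}^2\;\le\;\mu_1^{-1}\,\langle\phi_k,G^{N,{\bf a}}\phi_k\rangle\;\lesssim\;\lambda_k^{-1},
\]
using $\mu_1\ge\lambda_1^{(N)}\asymp 1$. Fortunately this weaker bound still yields $\sum_{|k|>K}\lambda_k^{-2\beta-1}<\infty$ precisely for $\beta>d/4-1/2$, so your cutoff scheme survives, but the argument as written is incorrect and the splitting then has to be carried out with care.

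The paper sidesteps this entirely. For the bi-Laplacian it writes
\[
\Xi^{b,{\bf a}}_{D,N}-\ahom^{-1}\Xi^b_D
=(\Xi^{b,{\bf a}}_{D,N}-\ahom^{-1}\Xi^b_{D,N})+\ahom^{-1}(\Xi^b_{D,N}-\Xi^b_D),
\]
and for the first piece computes, via the pseudo-eigenfunction representation,
\[
{\bf E}\otimes\bb E\bigl[|\langle\Xi^{b,{\bf a}}_{D,N}-\ahom^{-1}\Xi^b_{D,N},\phi_k\rangle|^2\bigr]
\;\lesssim\;(\lambda_k^{(N)})^{-2}\,{\bf E}\|\varphi^{N,{\bf a}}_k-\phi^N_k\|_{\ell^2}^2
\;\lesssim\;\frac{c_d(N)\,\|k\|^4}{N^2\,\|k\|^4}=\frac{c_d(N)}{N^2},
\]
where the last step uses \cite{gloria2014optimal} in the form ${\bf E}\|\varphi^{N,{\bf a}}_k-\phi^N_k\|_{\ell^2}^2\lesssim c_d(N)\|k\|^4/N^2$. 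The point is that the $\|k\|^4$ growth of the source norm is exactly absorbed by the $(\lambda_k^{(N)})^{-2}$, giving a bound \emph{uniform in $k$}. Summing against $\lambda_k^{-2\beta}$ over all of $\bb Z^d_N$ then gives $c_d(N)N^{d-2-4\beta}\to 0$ for $\beta>d/4-1/2$, with no cutoff and no appeal to uniform ellipticity. This makes the ``delicate balancing'' you anticipate unnecessary.
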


For more details on $\ahom$ in both contexts, see Remark~\ref{rem:effective}.
Recall that, by abuse of notation, we use $\ahom$ simultaneously for $\ahom = c \Id$, where $\Id$ is the $d \times d$ identity matrix and $c=\ahom \in \bb R_{+}$ the deterministic constant that only depends on the law of the environment. 

Note that in Theorem \ref{thm:ConvDisc} we only take the limit in $N\rightarrow \infty$ to obtain the homogenized limiting field.
This is due to the translation invariant nature of the environment ${\bf a}$.

Finally, let us mention that one could easily prove that non-homogeneous bi-Laplacian fields converge almost surely (in more irregular Sobolev spaces than the ones stated above) to their homogeneous counterpart by a simple Borel-Cantelli argument.

\section{Proofs}
\label{sec:proofs}

\subsection{Proof of Theorem \ref{thm:ConvCont}}
\label{subsec-proof-bilap}
We start this section by stating some necessary results regarding stochastic homogenization.
Then we proceed to prove some useful lemmata necessary to recover our representation of the Green's functions before finally proving the main theorems.

Under the Assumptions (A1)-(A3), there are good bounds for stochastic homogenization of the solutions of  elliptic partial differential equations.
Indeed, one can prove that there is a positive constant $\ahom$ (see Remark~\ref{rem:effective}),   such that for $u$, any sufficiently regular function, we can estimate its $L^2(D)$ distance to the function $u^\varepsilon$ given by 
\begin{equation}
	\begin{cases}
		-\naeps u^\varepsilon = \ahom \Delta u & \text{ in } D,\\
		\;\;\;\;
		\phantom{\naeps}u^\varepsilon = 0 & \text{ in } \partial D.
	\end{cases}
	\label{eq:NHPDE}
\end{equation}
We will state a simplified version of a result from \cite{Armstrong2019} which will be one of the main ingredients for our proofs.

For this, we use the stochastic integrability notation $\mc O_s(\cdot)$ according to the law ${\bf P}$.
Given a random variable $X$ and parameters $s,\theta \in (0,\infty)$, we say that $X=\mc
O_s(\theta)$ if
\begin{equation}\label{eq:NotO}
  {\bf E}[\exp ((\theta^{-1} X_{+})^{s})] \le 2,
\end{equation}
where $a_+:=\max\{0,a\}$. 

\begin{theorem}[Theorem 6.17 in \cite{Armstrong2019}] \label{thm:AKM}
Under Assumptions $(A1)-(A3)$, fix  $s \in (0,2)$, $\alpha \in(0,1], p \in(2, \infty],$ a regular domain $D \subseteq \mathbb{R}^{d}$ and $\varepsilon \in(0, \frac{1}{2}]$.
There exists $\delta=\delta(d,\Lambda)\in (0,\infty)$, $C=C(s,\alpha,p,D,d,\Lambda) \in (0,\infty)$ and a  non-negative random variable $\mc X_{\varepsilon}$ satisfying the following estimate
  \begin{equation}
	  \mc X_\varepsilon
	  \le
	  \begin{cases}
		  \mc O_{s/2}(C\varepsilon^{\alpha}( \log (1/\varepsilon) ) ) 
		  & \text{ for }d=2\\
		  \mc O_{1+\delta}(C\varepsilon^{\alpha} ) 
		  & \text{ for }d\ge 3\\
	  \end{cases}
	  \label{eq:StochOX}
  \end{equation}
 such that the following holds: for every $u \in W^{1+\alpha, p}(D),$ and $u^{\varepsilon} \in H^{1}_0(D)$ be the solution of \eqref{eq:NHPDE} then we have that %
  \begin{equation}
	  \label{eq:thmAKM}
	  \|u^{\varepsilon}-u\|_{L^2(D)}
	  \le \mc X_{\varepsilon}\|u\|_{W^{1+\alpha, p}(D)} .
  \end{equation}
\end{theorem}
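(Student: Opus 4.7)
The plan is to sketch a proof of Theorem~\ref{thm:AKM} via the quantitative two-scale expansion of Armstrong-Kuusi-Mourrat, combining deterministic PDE arguments (corrector-based approximation on the bounded domain $D$) with stochastic corrector estimates derived from their subadditive/renormalization framework.

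\textbf{Step 1 (Correctors).} First I would construct the stationary first-order corrector $\phi_e$ and its companion flux corrector $\sigma_e$ associated to each unit vector $e\in\bb R^d$: $\phi_e$ solves $-\nabla \cdot \mathbf{a}(\nabla \phi_e + e) = 0$ on $\bb R^d$ with $\nabla \phi_e$ stationary and mean zero, while $\sigma_e$ is the stationary skew-symmetric tensor satisfying $\nabla \cdot \sigma_e = \mathbf{a}(\nabla \phi_e + e) - \ahom e$. Their existence under (A1)--(A3) and the identification of $\ahom$ follow from the variational construction with the subadditive quantities $\mu(U),\mu^*(U)$ of Chapters~1--3 of \cite{Armstrong2019}.

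\textbf{Step 2 (Quantitative sublinearity).} The core probabilistic input is the quantitative sublinearity of $(\phi_e,\sigma_e)$: there exist random constants $\mc Z_R$, bounded by $\mc O_s(C R^{-\kappa})$ for $d\ge 3$ (resp. with a $\log$ correction for $d=2$) for some exponent $\kappa=\kappa(d,\Lambda)>0$, such that
\begin{equation*}
  R^{-1}\bigl(\|\phi_e\|_{\underline{L}^2(B_R)}+\|\sigma_e\|_{\underline{L}^2(B_R)}\bigr) \le \mc Z_R.
\end{equation*}
This is the deepest ingredient: it is obtained from the concentration for the subadditive energies, an iteration argument on dyadic scales yielding $C^{0,1-}$-type regularity for $\ahom$-harmonic functions, and the transfer of this regularity back to $\naeps$-harmonic functions via the large-scale Lipschitz estimate. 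The stochastic integrability exponents $s/2$ (for $d=2$) and $1+\delta$ (for $d\ge 3$) come from the multiplicative combination of such bounds and are what force the dichotomy in \eqref{eq:StochOX}.

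\textbf{Step 3 (Two-scale expansion on $D$).} Given $u\in W^{1+\alpha,p}(D)$ and the homogenized problem $-\ahom\Delta u = f$, define the corrected function
\begin{equation*}
  w^{\varepsilon}(x):= u(x) + \varepsilon\,\eta_\varepsilon(x) \sum_{i=1}^d \phi_{e_i}(x/\varepsilon)\,\partial_i u(x),
\end{equation*}
where $\eta_\varepsilon$ is a smooth cutoff vanishing in a boundary layer of width $\sim\varepsilon$ (this uses that $D$ is a regular domain so that $\{x\in D:\dis(x,\partial D)<\varepsilon\}$ has measure $\lesssim \varepsilon$). Compute $-\naeps w^\varepsilon + \ahom \Delta u$: the interior terms reorganize, using the equation for $\phi_e$ and integration by parts against $\sigma_e$, into a divergence of a quantity controlled by $\varepsilon(\|\phi_{e_i}\|+\|\sigma_{e_i}\|)_{L^\infty_{\mathrm{loc}}}$ times second derivatives of $u$, while the boundary-layer cutoff contributes a term supported on a thin strip.

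\textbf{Step 4 (Energy estimate and conclusion).} Setting $r^\varepsilon := u^\varepsilon - w^\varepsilon \in H^1_0(D)$ (after a boundary correction absorbed into $\eta_\varepsilon$), test the equation satisfied by $r^\varepsilon$ against itself and invoke uniform ellipticity to get
\begin{equation*}
  \|\nabla r^\varepsilon\|_{L^2(D)} \lesssim \varepsilon \|\nabla^2 u\|_{L^p(D)}\,\mc Z_{1/\varepsilon} + (\text{boundary layer}).
\end{equation*}
The boundary layer is estimated using the $W^{1+\alpha,p}$-regularity of $u$ and the Hölder embedding; the extra factor $\varepsilon^{\alpha}$ (instead of $\varepsilon$) comes from interpolation between the interior rate $\varepsilon$ and the boundary layer loss, giving $\|r^\varepsilon\|_{H^1_0(D)}\le \mc X_\varepsilon \|u\|_{W^{1+\alpha,p}(D)}$ with $\mc X_\varepsilon$ a polynomial in the $\mc Z_{1/\varepsilon}$'s. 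A Poincaré inequality upgrades this to $L^2$, and Step~2 supplies the required stochastic integrability of $\mc X_\varepsilon$.

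\textbf{Main obstacle.} The genuinely hard step is Step~2: proving quantitative sublinearity of the correctors with the optimal (almost) scaling and with the sub-exponential stochastic integrability stated in \eqref{eq:StochOX}. Steps~3--4 are a deterministic PDE argument, but executing them on a bounded domain $D$ (rather than on $\bb R^d$ or the torus) forces one to carefully track the boundary layer, whose size ultimately determines why the error scales like $\varepsilon^\alpha$ rather than $\varepsilon$, and why the hypothesis $p>2$ is needed (to invoke Meyers-type higher integrability for the fluxes when closing the energy estimate).
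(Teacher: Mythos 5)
This statement is not proved in the paper at all: it is imported verbatim as Theorem~6.17 of \cite{Armstrong2019}, so there is no internal argument to compare your proposal against. Your sketch (correctors and flux correctors from the subadditive construction, quantitative sublinearity with the stated stochastic integrability, two-scale expansion with a boundary-layer cutoff on the regular domain $D$, and the closing energy estimate using the $W^{1+\alpha,p}$ regularity of $u$ with $p>2$) is a faithful outline of the proof given in that reference, i.e.\ essentially the same approach as the cited source, with the hard quantitative input correctly identified as the corrector sublinearity estimates.
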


The theorem above implies that  for each fixed $k$, as $\varepsilon \to 0$,  the function $\varphi^\varepsilon_k$ defined in \eqref{eq:NHPDEBasis} converges to $\phi_k$ in $L^2(D)$ in probability.
In the next lemma, we will quantify such convergence in terms of $k$ and $\varepsilon$.

\begin{lemma}
  \label{lem:growthError}
  On $(\Omega, \mathcal{F}, \mathbb{P})$ we have that for all $\alpha \in (0,1]$, 
  and $\kappa>0$
  \begin{equation}
    \|\varphi_k^\varepsilon-\phi_k\|_{L^2(D)}\lesssim_{D,\alpha,\kappa}
    {\mc X}_\varepsilon \lambda_k^{\frac{1+\alpha}{2}+\kappa}
    \label{eq:growthError}
  \end{equation}
  where $\mc X_\varepsilon$ satisfies the bound \eqref{eq:StochOX} for a suitable choice
  of parameters.
\end{lemma}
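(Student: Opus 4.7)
The plan is to recognise \eqref{eq:NHPDEBasis} as an instance of the set-up in Theorem~\ref{thm:AKM} applied to $u=\phi_k$, and thereby reduce the lemma to a purely deterministic estimate on $\|\phi_k\|_{W^{1+\alpha,p}(D)}$ in terms of $\lambda_k$.

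First, since $\Delta\phi_k=-\lambda_k\phi_k$ by \eqref{eq:defphik}, the source in \eqref{eq:NHPDEBasis} can be rewritten as $\lambda_k\ahom\phi_k=-\ahom\Delta\phi_k$, so (up to the sign convention in \eqref{eq:NHPDE}) $\varphi_k^{\varepsilon}$ is precisely the $u^\varepsilon$ produced by Theorem~\ref{thm:AKM} with $u=\phi_k$. That theorem therefore yields, for any $\alpha\in(0,1]$ and $p>2$,
\[
\|\varphi_k^{\varepsilon}-\phi_k\|_{L^2(D)}\le\mc X_\varepsilon\,\|\phi_k\|_{W^{1+\alpha,p}(D)},
\]
where $\mc X_\varepsilon$ satisfies \eqref{eq:StochOX} (with implicit constants now allowed to depend on $s,\alpha,p,D,d,\Lambda$).

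The remainder is deterministic. I would control $\|\phi_k\|_{W^{1+\alpha,p}(D)}$ in three steps. (i) By elliptic $L^p$-regularity for the Dirichlet problem on the regular domain $D$, $\|\phi_k\|_{W^{2,p}(D)}\lesssim_{D,p}\lambda_k\,\|\phi_k\|_{L^p(D)}$. (ii) By classical Sogge-type bounds for $L^p$-norms of Laplace eigenfunctions, $\|\phi_k\|_{L^p(D)}\lesssim_{D,p}\lambda_k^{\sigma(p,d)}$ with $\sigma(p,d)\downarrow 0$ as $p\downarrow 2$. (iii) Real interpolation between $L^p(D)$ and $W^{2,p}(D)$ at the fractional level $1+\alpha\in(1,2]$ combines (i) and (ii) into
\[
\|\phi_k\|_{W^{1+\alpha,p}(D)}\lesssim_{D,\alpha,p}\lambda_k^{(1+\alpha)/2+\sigma(p,d)}.
\]
Given $\kappa>0$, one then chooses $p>2$ sufficiently close to $2$ that $\sigma(p,d)<\kappa$, absorbs the $(\alpha,p)$-dependent constants into the implicit constant in $\lesssim_{D,\alpha,\kappa}$, and concatenates the two displays to obtain \eqref{eq:growthError}.

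The main obstacle is the interplay between the $p>2$ restriction imposed by Theorem~\ref{thm:AKM} and the requirement to keep the $\lambda_k$-exponent arbitrarily close to $(1+\alpha)/2$: this is precisely what forces the $\lambda_k^{\kappa}$ loss, and the reason $p$ must be taken close to $2$ in step (ii)-(iii). A secondary technical caveat, flagged in Remark~\ref{rem:Sobolev}, is that the spectral space $H^s_0(D)$ does not generally coincide with the singular-integral space $W^{s,2}(D)$ for non-integer $s$, so step (iii) must be carried out by real interpolation between the integer-order spaces $L^p$ and $W^{2,p}$, rather than by attempting to identify fractional spectral and non-spectral Sobolev spaces directly.
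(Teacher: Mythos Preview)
Your proposal is correct and follows the same overall strategy as the paper: apply Theorem~\ref{thm:AKM} with $u=\phi_k$, then bound $\|\phi_k\|_{W^{1+\alpha,p}(D)}$ by a power of $\lambda_k$ that can be pushed arbitrarily close to $(1+\alpha)/2$ by taking $p\downarrow 2$. The only substantive difference lies in how the interpolation is organised. You interpolate at a \emph{fixed} $p>2$ between $L^p(D)$ and $W^{2,p}(D)$, which requires the Calder\'on--Zygmund estimate $\|\phi_k\|_{W^{2,p}(D)}\lesssim\lambda_k\|\phi_k\|_{L^p(D)}$ for $p>2$; this is immediate on $C^{1,1}$ domains but needs a reference (e.g.\ Lipschitz-domain results for $p$ near $2$) in the convex case. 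The paper instead interpolates between $W^{2,2}(D)$ and $L^{p_2}(D)$ with \emph{different} integrability exponents, landing in $W^{1+\alpha,2+\delta}(D)$; this has the advantage of only invoking the $W^{2,2}$ regularity $\|\phi_k\|_{W^{2,2}(D)}\lesssim\|\Delta\phi_k\|_{L^2(D)}=\lambda_k$, which holds uniformly on both classes of regular domain. For the $L^p$ eigenfunction bound, the paper uses the sup-norm estimate $\|\phi_k\|_{L^\infty}\lesssim\lambda_k^{(d-1)/4}$ of Grieser together with H\"older interpolation, which is exactly what your ``Sogge-type'' bound reduces to here (and is all that is needed, since only $\sigma(p,d)\to 0$ matters). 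In short: same proof, slightly different bookkeeping; the paper's route sidesteps the $W^{2,p}$ issue on convex domains, while yours is a bit more streamlined once that regularity is granted.
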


\begin{proof}
Fix $\alpha \in (0,1], \kappa>0$ and let $p = 2 +\delta$ with $\delta>0$ to be defined later to be sufficiently small.
By applying Theorem~\ref{thm:AKM} to $u=\phi_k$, where $\phi_k$ is the eigenfunction associated to $\lambda_k$ the $k$-th eigenvalue of $(-\Delta)$ we have that for $p \in (2,\infty)$,
\begin{equation}
  \|\varphi^\varepsilon_k-\ahom\phi_k\|_{L^2(D)}
  \lesssim  \mc X_{\varepsilon}\|\phi_k\|_{W^{1+\alpha, p}(D)}.
  \label{eq:AKMphik}
\end{equation}
By \cite[Theorem 6.4.5]{bergh2012interpolation}, for any given a function $f \in C^\infty_c(\bb R^d)$, $s_1,s_2 \in \bb R$, $p_1,p_2 > 1$ and $\theta \in (0,1)$, we have 
\begin{equation}
  \|f\|_{W^{s_\theta,p_\theta}(\bb R^d)}
  \lesssim_{p_1,p_2,s_1,s_2,\theta,d}
  \|f\|^{1-\theta}_{W^{s_1,p_1}(\bb R^d)}
  \|f\|^\theta_{W^{s_2,p_2}(\bb R^d)}
  \label{eq:InterSob}
\end{equation}
as long as 
\begin{equation}
  \frac{1}{p_\theta}
  =
  \frac{1-\theta}{p_1}
  +
  \frac{\theta}{p_2}
  \text{ and }
  s_\theta
  =
  (1-\theta)s_1
  +
  \theta s_2.
  \label{eq:CondInter}
\end{equation}
This can also be extended to any such function $f \in W^{s_\theta,p_\theta}(D)$.
Now, if we choose $s_\theta = 1+\alpha$, $p_\theta = 2+\delta$, $s_1=p_1=2$ and $s_2=0$, we have that  $\theta = (1-\alpha)/2$ and 
\[
  p_2=p_2(\delta)
  :=\frac{2(2+\delta) (1-\alpha)}{4-(2+\delta)(1+\alpha)}.
\]
Notice that as $\delta \longrightarrow 0^+$, we have that $p_2 \longrightarrow 2^+$ for any $\alpha \in (0,1]$.
Furthermore, by classical results any $f \in C_c^\infty(\bar{D})$,  such that $f\mid_{\partial D}\equiv 0$ can be naturally extended to
\begin{equation*}
  \ext f(x) :=
  \begin{cases}
  	f(x), & x \in D \\  
	0, & x \not \in D,
  \end{cases}
\end{equation*}
so that $\|\ext f\|_{L^{p_2}(\mathbb{R}^d)}=\| f\|_{L^{p_2}(D)}$ and $\|\ext f\|_{W^{2,2}(\mathbb{R}^d)}=\| f\|_{W^{2,2}(D)}$.

As each of the $\phi_k$ are in $C_c^\infty(D)\cap H^{2,2}_0(D)$, we can use \eqref{eq:InterSob}, and the isometry properties above to get
\begin{align*}
  \|\phi_k\|_{W^{1+\alpha,p}(D)}
  &\le
  \|\ext \phi_k\|_{W^{1+\alpha,p}(\bb R^d)}\\
  &\lesssim_{\alpha,p_2,d}
  \|\ext\phi_k\|_{W^{2,2}(\bb R^d)}^{\frac{1+\alpha}{2}}
  \|\ext\phi_k\|_{L^{p_2}(\bb R^d)}^{\frac{1-\alpha}{2}}\\
  &=
  \|\phi_k\|_{W^{2,2}(D)}^{\frac{1+\alpha}{2}}
  \|\phi_k\|_{L^{p_2}(D)}^{\frac{1-\alpha}{2}}\\
  &\lesssim_{D,\alpha,d}
  \| \Delta \phi_k\|_{L^2(D)}^{\frac{1+\alpha}{2}}
  \|\phi_k\|_{L^{p_2}(D)}^{\frac{1-\alpha}{2}}\\
  &=
  \lambda_k^{\frac{1+\alpha}{2}}
  \|\phi_k\|_{L^{p_2}(D)}^{\frac{1-\alpha}{2}},
\end{align*}
where in the last inequality we estimated the $W^{2,2}(D)$-norm of $\phi_k$  by the $L^2(D)$-norm of $\Delta\phi_k = -\lambda_k \phi_k$ and that $||\phi_k||_{L^2(D)}=1$.
Now, we use \cite[Theorem 1]{grieser2002uniform}, which states that 
\begin{equation}
    \|\phi_k\|_{L^{\infty}(D)}
    \lesssim_{D}
    \lambda_k^{\frac{d-1}{4}}.
    \label{eq:UnifPhi}
\end{equation}
Using the interpolation version of H\"older inequality, the bound \eqref{eq:UnifPhi}, and the fact that $\|\phi_k\|_{L^2(D)}=1$, we have that 
\begin{equation}
    \|\phi_k\|_{L^{p_2}(D)}
    \lesssim_{D}
    \lambda_k^{\frac{(d-1)(1-2/p_2)}{4}}.
    \label{eq:Lp2Phi}
\end{equation}
The proof is completed by choosing $\delta$ small enough so that $\frac{(1-\alpha)(d-1)(1-2/p_2)}{8}\le \kappa$.
\end{proof}

\noindent
Consider $G^\varepsilon_D$,  the fundamental solution of $\naeps$, that is 
\begin{equation} \label{eq:defGDeps}
    \begin{cases}
	    (-\naeps G^\varepsilon_D(x,\cdot) )(y) 
	    = \delta(x-y) & \text{ if } x,y \in  D\\
		\;\;\;\;\; \;\;\;\; \;\,\,
	    \phantom{\naeps}G^\varepsilon_D(x,\cdot) = 0 & \text{ in } \partial D.
    \end{cases}
\end{equation}
Classical results of partial differential equations provide some useful properties for $G^\varepsilon$, for instance the symmetry of Green's function, that is $G_D(x,y)=G_D(y,x)$, see \cite[Theorem~1.3]{gruter1982green} for the case $d \ge3 $ and \cite[Section~6]{dolzmann1995estimates} for the case $d=2$.
 
The following lemma provides a representation for $ G^\varepsilon_D$.
It introduces a natural way of comparing it to the Green's function of $\ahom \Delta$, and gives the main intuition behind our results.
Remark that this lemma justifies the representation \eqref{eq:RepXieps} of the non-homogeneous bi-Laplacian fields.
\begin{lemma} \label{lem:repGDeps}
  Let ${\bf a} \in \Omega(\Lambda)$, and $\varepsilon \in (0,1]$. 
  Then, we have that the following representation 
  \begin{equation} \label{eq:RepGDeps}
    	G_D^\varepsilon(x,y)
	= \sum_{k = 1}^{\infty} 
	\frac{\phi_k(x) \varphi^\varepsilon_k(y)}{\ahom\lambda_k}
	= \sum_{k= 1}^{\infty} 
	\frac{\varphi^\varepsilon_k(x) \phi_k(y)}{\ahom\lambda_k},
  \end{equation}
  where $\ahom >0$ is the same constant defined in \eqref{def:a}.
 For Lebesgue almost all fixed $x \in D$, for each $\beta > \frac{d}{4}-\frac{1}{2}$  the series above converges ${\bf P}$-a.s. in the space $H^{-\beta}_0(D)$.
\end{lemma}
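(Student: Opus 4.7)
The plan has two main ingredients: identify the $\phi_k$-Fourier coefficients of $y\mapsto G^\varepsilon_D(x,y)$ directly from the PDE, and then establish $H^{-\beta}_0$-convergence by an $L^2$-in-$x$ tail estimate using a suitable energy bound on $\varphi^\varepsilon_k$.

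For the spectral identity, I would test the defining equation $-\naeps G^\varepsilon_D(x,\cdot)=\delta_x$ against $\varphi^\varepsilon_k$. Since both $G^\varepsilon_D(x,\cdot)$ and $\varphi^\varepsilon_k$ vanish on $\partial D$, self-adjointness of $\naeps$ moves the operator onto the test function, and combining with $-\naeps\varphi^\varepsilon_k=\lambda_k\ahom\phi_k$ yields
\[
\<G^\varepsilon_D(x,\cdot),\phi_k\>=\frac{\varphi^\varepsilon_k(x)}{\ahom\lambda_k}.
\]
Reading this as the $\phi_k$-coefficient of $y\mapsto G^\varepsilon_D(x,y)$ produces the second equality in \eqref{eq:RepGDeps}; the first equality then follows from the symmetry $G^\varepsilon_D(x,y)=G^\varepsilon_D(y,x)$ quoted from \cite{gruter1982green,dolzmann1995estimates}.

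For the convergence, let $S_N(x,\cdot):=\sum_{k=1}^{N}\varphi^\varepsilon_k(x)\phi_k(\cdot)/(\ahom\lambda_k)$. By Definition~\ref{def:Hspaces},
\[
\|S_N(x,\cdot)-S_M(x,\cdot)\|^2_{H^{-\beta}_0(D)}=\sum_{M<k\le N}\frac{|\varphi^\varepsilon_k(x)|^2}{\ahom^2\lambda_k^{2+2\beta}},
\]
so by Tonelli
\[
\int_D\|S_N(x,\cdot)-S_M(x,\cdot)\|^2_{H^{-\beta}_0(D)}\,dx\;\lesssim\;\sum_{M<k\le N}\frac{\|\varphi^\varepsilon_k\|^2_{L^2(D)}}{\lambda_k^{2+2\beta}}.
\]
The needed energy bound $\|\varphi^\varepsilon_k\|_{L^2(D)}\lesssim\lambda_k^{1/2}$ is obtained by multiplying $-\naeps\varphi^\varepsilon_k=\lambda_k\ahom\phi_k$ by $\varphi^\varepsilon_k$, integrating, and using uniform ellipticity on the left-hand side; on the right, integration by parts via $-\Delta\phi_k=\lambda_k\phi_k$ and Cauchy--Schwarz give $\int\phi_k\varphi^\varepsilon_k\,dx\lesssim\lambda_k^{-1/2}\|\nabla\varphi^\varepsilon_k\|_{L^2}$, whence $\|\nabla\varphi^\varepsilon_k\|_{L^2}\lesssim\lambda_k^{1/2}$ and Poincaré passes the bound to $L^2$. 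The tail is then controlled by $\sum_{k>M}\lambda_k^{-1-2\beta}$, which by Weyl's law $\lambda_k\asymp k^{2/d}$ is finite precisely when $\beta>d/4-1/2$ and tends to $0$ as $M\to\infty$. Fubini then gives that $(S_N(x,\cdot))_N$ is Cauchy in $H^{-\beta}_0(D)$ for Lebesgue-almost-every $x\in D$; since the $\phi_k$-coefficients of the limit match those of $G^\varepsilon_D(x,\cdot)$, the limit must be $G^\varepsilon_D(x,\cdot)$ itself.

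I expect the main delicate point to be obtaining the sharp threshold $\beta>d/4-1/2$ rather than the naive $\beta>d/4$: the extra $\lambda_k^{-1/2}$ factor comes precisely from treating $\phi_k$ in an $H^{-1}$-type norm rather than in $L^2$. Pointwise estimates on $\varphi^\varepsilon_k(x)$ appear out of reach in this random setting, which is why the $L^2$-in-$x$ averaging is essential. Finally, since all constants depend only on $\Lambda$, the argument in fact gives convergence for every $\mathbf{a}\in\Omega(\Lambda)$, so the $\mathbf{P}$-a.s.\ conclusion is immediate.
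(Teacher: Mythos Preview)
Your proof is correct and the spectral identification of the Fourier coefficients is essentially identical to the paper's. The difference lies in the convergence estimate.

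The paper bounds $\|\varphi^\varepsilon_k\|_{L^2(D)}^2$ via the triangle inequality $\|\varphi^\varepsilon_k\|_{L^2}\le \|\phi_k\|_{L^2}+\|\varphi^\varepsilon_k-\phi_k\|_{L^2}$ and then invokes Lemma~\ref{lem:growthError} (itself based on the quantitative stochastic homogenization bound of Theorem~\ref{thm:AKM}) to control the second term by $\mc X_\varepsilon\lambda_k^{(1+\alpha)/2+\kappa}$. This produces a tail of the form $\sum_k(1+\mc X_\varepsilon^2\lambda_k^{1+\alpha+2\kappa})\lambda_k^{-2\beta-2}$, finite ${\bf P}$-a.s.\ for $\beta>d/4-1/2$ after sending $\alpha,\kappa\downarrow 0$.

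Your route is more elementary: the energy identity plus the trick of writing $\phi_k=\lambda_k^{-1}(-\Delta)\phi_k$ and integrating by parts yields $\|\varphi^\varepsilon_k\|_{L^2}\lesssim_\Lambda\lambda_k^{1/2}$ directly, with constants depending only on $\Lambda$ and the Poincar\'e constant of $D$. This avoids stochastic homogenization altogether for this lemma and, as you note, gives the conclusion deterministically for every $\mathbf a\in\Omega(\Lambda)$, which is strictly stronger than the ${\bf P}$-a.s.\ statement. The paper's approach, on the other hand, reuses a lemma that is needed anyway for the main theorems, and its intermediate bound $\|\varphi^\varepsilon_k-\phi_k\|_{L^2}\to 0$ is the quantity that actually drives those proofs. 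Both routes land on the same threshold $\beta>d/4-1/2$.
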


\begin{proof}
Remember that for any ${\bf a }\in \Omega(\Lambda)$, the Green's function is point-wise well-defined on $D^\prime:=D\times D \setminus \{ (x,x):x \in D \}$.
Call for simplicity $G^{\varepsilon}_x:=G^{\varepsilon}_D(x,\cdot)$ for some fixed $x\in D$.
We have that for $d \ge 3$, $G^\varepsilon_x \in W^{1,1}_0(D)$ (see \cite{gruter1982green}) and for $d=2$ we have that $G^\varepsilon_x \in L^1(D)$ (see \cite{taylor2013green}).
In either case, we can see $G^\varepsilon_x$ as a tempered distribution and since $\phi_k \in C^\infty_c(D)$, we get that $ \< G^\varepsilon_x, \phi_k \>$ is well-defined.

In order to calculate $\hat{G}^\varepsilon_x(k):=\<G^\varepsilon_x,\phi_k\>$, note that as $G^\varepsilon$ is the Green's function of $\naeps$ and the operator $\naeps$ is self-adjoint.
Therefore we have that 
\begin{align*}
  \<G^\varepsilon_x,\phi_k\>
  &=
  \frac{1}{\ahom \lambda_k} 
  \<G^\varepsilon_x,\naeps\varphi^\varepsilon_k\>
  \\ &=
  \frac{\varphi^\varepsilon_k(x)}{\ahom \lambda_k}.
\end{align*}
Hence,
%
\begin{align*}
  \|G^\varepsilon_x\|_{H^{-\beta}_0(D)}^2
  &=
  \sum_{k = 1}^{\infty} |\hat{G}^\varepsilon_x(k)|^2 \lambda_k^{-2\beta}
  =
  {\ahom}^{-2}
  \sum_{k= 1}^{\infty} 
  \| \varphi^\varepsilon_k(x)  \|^2
  \lambda_k^{-2\beta-2}.
\end{align*}
Finally, by integrating in $x$, we have that 
\begin{align*}
  \Big\|\|G^\varepsilon_x\|_{H^{-\beta}_0(D)}\Big\|_{L^2(D)}^2
  & \le
  \sum_{k= 1}^{\infty} 
  \| \varphi^\varepsilon_k \|^2_{L^2(D)}
  \lambda_k^{-2\beta-2}
  \\ &\le
  \sum_{k = 1}^{\infty} 
  (
    \| \phi^\varepsilon_k \|^2_{L^2(D)}
    +
    \| \varphi^\varepsilon_k-\phi^\varepsilon_k \|^2_{L^2(D)}
  )
  \lambda_k^{-2\beta-2}
  \\ &\le
  \sum_{k=1}^{\infty} 
  (
    1
    +
    \mc X_\varepsilon \lambda_k^{1+\alpha+2\kappa}
  )
  \lambda_k^{-2\beta-2}
  <\infty,\quad {\bf P}-a.s,
\end{align*}
as long as $\beta > \frac{d}{4}-\frac{1}{2}$, using that $\alpha$ and $\kappa$ can be made arbitrarily small and Weyl's law.
This also implies that $\bf P$-\textit{a.s} and almost all $x$ according to the Lebesgue measure, $\|G^\varepsilon_x\|_{H^{-\beta}_0(D)}$ is finite.
\end{proof}

\subsection*{Stochastic homogenization of the GFF}

\noindent
For this proof we will use a truncation argument and apply the following theorem.

\begin{theorem}[Theorem~3.2 in \cite{billingsley2013convergence}]
	\label{thm:Billingsley}
	Let $S$ be a metric space with metric $\rho$. Suppose that 
	$(X_{n, K}, X_{n})$ are elements of $S \times S$. If
	\begin{equation} \label{eq:thmBill}
		\lim _{K \to+\infty} \limsup _{n \to+\infty} 
		\mu(\rho(X_{n, K}, X_{n}) \geq \tau)=0
	\end{equation}
	for all $\tau>0,$ and $X_{n, K} \stackrel{d}{\longrightarrow}_{n} Z_{K} 
	\stackrel{d}{\longrightarrow}_{K} X$,  then 
	$X_{n} \stackrel{d}{\longrightarrow}_{n} X$.
\end{theorem}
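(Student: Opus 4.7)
The plan is to verify convergence in distribution $X_n \xrightarrow{d} X$ via the Portmanteau theorem: I will show that for every closed set $F \subset S$,
\[
  \limsup_{n \to \infty} \mu(X_n \in F) \le \mu(X \in F),
\]
and then this characterization of weak convergence immediately gives the conclusion. The key observation is that the hypothesis on $\rho(X_{n,K}, X_n)$ is precisely what is needed to ``interchange'' $X_n$ and $X_{n,K}$ up to a small $\tau$-fattening of $F$, so that the given double convergence $X_{n,K} \xrightarrow{d} Z_K \xrightarrow{d} X$ can be applied on each ``slice.''

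Concretely, I would first fix $\tau > 0$ and, for a closed $F$, introduce the closed $\tau$-neighborhood $F^{\tau} := \{x \in S : \rho(x, F) \le \tau\}$. From the set inclusion
\[
  \{X_n \in F\} \subseteq \{X_{n, K} \in F^{\tau}\} \cup \{\rho(X_{n,K}, X_n) > \tau\}
\]
one obtains
\[
  \mu(X_n \in F) \le \mu(X_{n, K} \in F^{\tau}) + \mu(\rho(X_{n,K}, X_n) > \tau).
\]
Taking $\limsup_{n \to \infty}$ and using that $X_{n,K} \xrightarrow{d} Z_K$ together with Portmanteau applied to the closed set $F^{\tau}$ yields
\[
  \limsup_{n \to \infty} \mu(X_n \in F) \le \mu(Z_K \in F^{\tau}) + \limsup_{n \to \infty} \mu(\rho(X_{n,K}, X_n) > \tau).
\]

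Next I would take $K \to \infty$. The second term on the right-hand side vanishes by the standing hypothesis \eqref{eq:thmBill}, and Portmanteau applied to $Z_K \xrightarrow{d} X$ (with $F^{\tau}$ again closed) gives $\limsup_{K\to\infty}\mu(Z_K \in F^{\tau}) \le \mu(X \in F^{\tau})$. Hence
\[
  \limsup_{n \to \infty} \mu(X_n \in F) \le \mu(X \in F^{\tau}).
\]
Finally, I would let $\tau \downarrow 0$: since $F$ is closed, $F^{\tau} \downarrow F$, and by continuity of $\mu(X \in \cdot)$ from above, $\mu(X \in F^{\tau}) \to \mu(X \in F)$, giving the Portmanteau inequality and thus $X_n \xrightarrow{d} X$.

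The main conceptual care-point is the order of the limits $n \to \infty$, $K \to \infty$, $\tau \downarrow 0$, and the fact that one must fatten $F$ to a \emph{closed} set in order to apply Portmanteau, rather than working with $F$ itself (which would force one to deal with boundary sets). No serious technical obstacle is expected, since the Lipschitz-style alternative (test against bounded Lipschitz functions $f$, splitting $|f(X_n) - f(X_{n,K})|$ on $\{\rho(X_{n,K},X_n) \le \tau\}$ and its complement) would work equally well; the Portmanteau route is preferred here because the hypothesis is stated directly in terms of the tail probability $\mu(\rho(X_{n,K}, X_n) \ge \tau)$ and meshes cleanly with the fattening argument.
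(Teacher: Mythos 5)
Your proof is correct: the fattening inclusion $\{X_n \in F\} \subseteq \{X_{n,K} \in F^{\tau}\} \cup \{\rho(X_{n,K},X_n) > \tau\}$, the two successive applications of Portmanteau, and the final $\tau \downarrow 0$ limit using $F^{\tau} \downarrow F$ for closed $F$ all go through (the harmless switch between $\geq \tau$ and $> \tau$ is absorbed by $\mu(\rho > \tau) \le \mu(\rho \ge \tau)$). The paper itself does not prove this statement — it is quoted as Theorem~3.2 of \cite{billingsley2013convergence} — and your argument is essentially the standard proof given in that reference, so there is nothing further to compare.
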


We will apply the theorem by choosing $S=H^{-\beta}_0(D)$, $X = \Xi^{g}_D$, $X_n=\Xi_D^{g,\varepsilon}$ and $\mu={\bf P} \otimes {\bb P}$.
 Define the following truncations $X_{n,K}:=\Xi^{g,\varepsilon}_K$ and $Z_K:= \Xi^{g}_K$, centered Gaussian fields such that 
\begin{equation*}\label{eq:defhepsK}
  \bb E
  [
      \< \Xi^{g,\varepsilon}_K,\phi_k\>
      \<\Xi^{g,\varepsilon}_K,\phi_{k^\prime}\>
  ]
  :=
      \frac{
	\<\phi_{k},
	\varphi^{\varepsilon}_{k^\prime}\>
      }{\ahom \lambda_k}
      \1_{  \{k,k^\prime \le K \}}
\end{equation*}
and
\begin{equation*}\label{eq:defhK}
  \bb E
  [
      \<\Xi^g_K,\phi_k\>
      \<\Xi^g_K,\phi_{k^\prime}\>
  ]
  :=
  \frac{\delta_{k,k^\prime}}{\ahom \lambda_k}
  \1_{  \{ k,k^\prime \le K \}}.
\end{equation*}
By truncating the covariance of $Z_K$ and $X_{n,K}$ we essentially reduce the problem to convergence of finite dimensional Gaussian vectors.
Since $\Xi^{g,\varepsilon}_K$ and $\Xi^g_K$ are determined by their covariance structure, the convergence in distribution result will follow from  proving that their covariance matrices converge.
This will follow from Lemma~\ref{lem:growthError} since we truncated for the $k$'s to be in the set $\{1,\dots,K\}$.

In the remainder we demonstrate that \eqref{eq:thmBill} holds.
Define the error field by 
\begin{equation}\label{eq:error-GFF}
	\Xi_{\err}^{g,\varepsilon}:=
	\Xi_D^{g,\varepsilon}-
	\Xi^{g,\varepsilon}_K
	=
	\sum^{\infty}_{k=K+1} \<\Xi^{g,\varepsilon}_D, \phi_k\> \phi_k.
\end{equation}
Using the definition of the norm $H^{-\beta}_0(D)$ and the monotone convergence theorem we have
\begin{align*}
	  {\bb E}  \left[\|\Xi_{\err}^{g,\varepsilon}\|_{H^{-\beta}_0(D)}^2  \right]
&=
	{\bb  E}\left[\sum^{\infty}_{k=K+1}   
	  |\<\Xi_{\err}^{g,\varepsilon}, \phi_k\>|^2  \lambda_{k}^{-2\beta}\right]
=
	\sum^{\infty}_{k=K+1}   
	  {\bb  E}\left[|\<\Xi_{\err}^{g,\varepsilon}, \phi_k\>|^2\right]
	  \lambda_{k}^{-2\beta}.
\end{align*} 
Using the triangular inequality  first and then Lemma~\ref{lem:growthError}, we get that the sum above is bounded above (up to a multiplicative constant) by
\begin{align*}
	\sum^{\infty}_{k=K+1} 
	  \< \phi_k,
	  \varphi^{\varepsilon}_k\>
	  \lambda_{k}^{-2\beta-1}
&\le
	\sum^{\infty}_{k=K+1} 
	  \left( 1 + \mc X_{\varepsilon}\lambda_k^{ \frac{1+\alpha}{2}+\kappa}\right)^2 
	  \lambda_{k}^{-2\beta-1}
\\&\lesssim
	\sum^{\infty}_{k=K+1} 
	  \lambda_{k}^{-2\beta-1}
  +
	  \mc X_{\varepsilon}^2
	\sum^{\infty}_{k=K+1} 
	  \lambda_{k}^{-2\beta+\alpha+\kappa},
\end{align*}
which is finite as long as $\beta > \frac{d}{4}$, as we can choose $\kappa,\alpha$ to be arbitrarily small together with Weyl's law.
We can then use Markov's inequality with the product measure $\bf P \otimes {\bb P}$, to get that 
\begin{align*}
  \lim_{K \to \infty} \limsup_{n \to \infty} 
	{\bf P} \otimes {\bb P}
	\left[\| \Xi_{\err}^{g, \varepsilon}\|_{H^{-\beta}_0(D)}^2 \ge \tau \right]
	=0.
\end{align*}
Applying Theorem~\ref{thm:Billingsley}, we conclude the proof.

\bigskip

\subsection*{Stochastic homogenization of the bi-Laplacian field} 
\noindent
Let $\beta > \frac{d}{4}-\frac{1}{2}$, $\tau>0$, by Markov's inequality 
\begin{equation} \label{eq:MarkIn}
	{\bb P}  
	\left[ \|\Xi_D^{b,\varepsilon}-\ahom^{-1}\Xi_D^{b}\|_{H^{-\beta}_0(D)} \ge \tau\right]
	\le
	\frac{
	{\bb E}
	 \left[\|\Xi^{b,\varepsilon}_D-\ahom^{-1}\Xi^b_D\|^2_{H^{-\beta}_0(D)}
	 \right]}{\tau^2}.	
\end{equation}
Let $\alpha,\kappa >0$ be small enough so that $\beta - \frac{\alpha+\kappa}{2} >\frac{d}{4}-\frac{1}{2}$.
From the representations \eqref{eq:RepBiLap}, \eqref{eq:RepXieps}, and Lemma~\ref{lem:growthError} we conclude that 
\begin{align}\label{eq:ConvBilap-Control-1} 
   {\bb E} \left[\|\Xi_D^{b,\varepsilon}-\ahom^{-1}\Xi^b_D\|^2_{H^{-\beta}_0(D)} \right]
& = \nonumber
    \frac{1}{\ahom^2}
    \sum_{k\ge 1}  {\bb E}  \left[|\hat{\xi}^\varepsilon(k)-\hat{\xi}(k)|^2  \right]
    \lambda_k^{-2\beta-2}
\\ & = \nonumber
    \frac{1}{\ahom^2}
    \sum_{k\ge 1}  \|\varphi^\varepsilon_k - \phi_k\|_{L^2(D)}^2
    \lambda_k^{-2\beta-2}
\\& \lesssim \nonumber
    \sum_{k\ge 1}  \mc X_\varepsilon ^2 
    \lambda_k^{-2\beta-1+\alpha+\kappa}
\\ & \lesssim 
    \mc X_\varepsilon ^2 
\end{align}
where in the last inequality, we used Weyl's law.
Again by using Markov's inequality, we get 
\begin{equation}\label{eq:ConvBilap-Control-2} 
	{\bf P} \otimes{\bb P}
	\Big[ \|\Xi_D^{b,\varepsilon}-\ahom^{-1}\Xi^b_D\|_{H^{-2\beta}_0(D)} \ge \tau\Big]
\lesssim  
	\frac{{\bf  E} [\mc X_\varepsilon^2]}{\tau^2}
\lesssim  
	\begin{cases}
	\frac{(\varepsilon^{\alpha}|\log(\varepsilon)|)^2}{\tau^2}, & \text{ if } d = 2\\
	\frac{\varepsilon^{2\alpha}}{\tau^2}, & \text{ if } d\geq 3
\end{cases}
\end{equation}
taking $\varepsilon \to 0$ concludes the proof.

\subsection{Proof of Theorem \ref{thm:ConvDisc}}
\label{subsec-proof-bilap-discrete}

The strategy to prove the convergence result on the discrete torus will follow similar ideas as in the continuum.
Indeed, we can expand the Green's function for $\naN$ in terms of the eigenfunctions of the discrete Laplacian.
The advantage, in this case, is that the eigenfunctions are precisely the Fourier basis of $\ell^2(\bb T^d_N)$.

Discrete Fourier analysis was also the main tool of previous works on scaling limits of odometer fields, see \cite{chiarini2021constructing,cipriani2019scaling}.
 In the case of random environments note that the Fourier basis ceases to be a basis of eigenfunctions for the operator $\naN$.
Again, the key idea is to show that, for large $N$, the Fourier basis will be close to the basis of eigenvectors.

Let $g_N \in \ell^2(\bb T^d_N)$ with $\sum_{\hat{x}\in \bb T^d_N} g_N(\hat{x})=0$. 
Consider $f_N$ the solution of the equation
  \begin{equation}\label{eq:DiscPDEHom}
    \begin{cases}
      -\naN f_N (\hat{x})
      =
      g_N(\hat{x}), &
     \hat{x}\in \mathbb{T}^d_N \\
	  \,\,\; \; \; \; \; \; \; \;
      \sum_{\hat{x} \in \bb T^d_N} f_N(\hat{x}) = 0.
    \end{cases}
  \end{equation}
The next theorem states good bounds for the difference between $f_N$ and $\tilde{f}_N$, the function in  $\ell^2(\bb T^d_N)$ solving the finite difference equation
  \begin{equation}\label{eq:DiscPDEHom2}
    \begin{cases}
      -\ahom\Delta_N \tilde{f}_N (\hat{x})
\,
      =
      g_N(\hat{x}), &
     \hat{x}\in \mathbb{T}^d_N \\
	  \,
      \sum_{\hat{x} \in \bb T^d_N} f_N(\hat{x}) = 0,
    \end{cases}
  \end{equation}
where $\ahom$ is a positive constant that only depends on the law of ${\bf a}$.
The next theorem is going to serve as the equivalent of Theorem~\ref{thm:AKM} for the discrete context and is a simple adaptation of \cite[Corollary 1.2]{gloria2014optimal}.

\begin{theorem}[Corollary 1.2, \cite{gloria2014optimal}] \label{thm:GON}
  There exists a deterministic positive constant $ \ahom$ only depending on
  the law of ${\bf a}$  and $d$ with the following property.  Given $N \ge1$ with $f
  \in C^\infty (\bb T^d)$, let $f_N$ be the solution of
  \eqref{eq:DiscPDEHom} and $\tilde{f}_N$ be the solution of
  \eqref{eq:DiscPDEHom2}. We have
  \begin{equation}\label{eq:thmGON}
    {\bf E} \left[\|f_N-\tilde{f}_N\|^2_{\ell^2(\bb T^d_N)}\right]
    \lesssim
    c_d(N) N^{-2}  \|g_N\|^2_{\ell^2(\bb T^d_N)},
  \end{equation}
  where $c_2(N)=\log(N)$  and equal to $c_d(N)=1$ for $d \ge 3$.
\end{theorem}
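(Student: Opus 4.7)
The plan is to reduce Theorem~\ref{thm:GON} to \cite[Corollary 1.2]{gloria2014optimal}, the infinite-lattice version of this statement, through two cosmetic modifications: a rescaling from $\bb T^d_N = \frac{1}{N}\bb Z^d_N$ to the unit-spacing lattice $\bb Z^d_N$, and a check that the periodic extension ${\bf a}_N = \ext_N \circ \Pi_N {\bf a}$ satisfies the same hypotheses as the iid lattice environment considered in \cite{gloria2014optimal}.

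The first step is the rescaling. Under the change of variables $\hat{x} \mapsto N\hat{x}$, the operator $\naN$ defined in \eqref{eq:def-random-operator-discrete} becomes $N^2$ times the standard unit-spacing conductance operator $-\nabla^{\ast} \cdot {\bf a} \nabla$, and equations \eqref{eq:DiscPDEHom}-\eqref{eq:DiscPDEHom2} transform into
\[
  -\nabla^{\ast} \cdot {\bf a} \nabla F_N = N^{-2} G_N,
  \qquad
  -\ahom \Delta \tilde{F}_N = N^{-2} G_N,
\]
on $\bb Z^d_N$, where $F_N(x) := f_N(x/N)$ and analogously for $\tilde{F}_N, G_N$. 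In this normalization the claim reads as a quantitative $\ell^2$ homogenization bound on a box of sidelength $N$ at microscopic scale $1$, which is precisely what \cite[Corollary 1.2]{gloria2014optimal} provides with $\varepsilon = 1/N$. Unwinding the rescaling, using $\|f_N\|^2_{\ell^2(\bb T^d_N)} = N^{-d} \sum_{x \in \bb Z^d_N} |F_N(x)|^2$ on both sides of the Gloria-Otto bound, then yields the stated $N^{-2} c_d(N)$ rate.

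The second step is to justify the passage from the infinite-lattice setting of \cite{gloria2014optimal} to the periodic torus $\bb Z^d_N$. Because ${\bf P}$ is a product measure, the periodization ${\bf a}_N$ is stationary with the same single-edge marginals as the original environment, and the spectral-gap / Efron-Stein inequality that drives the quantitative inputs in \cite{gloria2014optimal} continues to hold, since it is local in the edges and insensitive to the periodic identification. The substantive modification is that the correctors used in \cite{gloria2014optimal} must be replaced by their $N$-periodic analogues on $\bb Z^d_N$, whose fluctuations are controlled by the discrete Green's function of $\Delta_N$ on the torus. This is precisely the step that produces the dimension-dependent factor $c_d(N)$: in $d=2$ the Green's function has a logarithmic divergence, giving $c_2(N) = \log N$, while in $d \ge 3$ it is bounded uniformly in $N$, giving $c_d(N) = 1$.

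The main obstacle in this reduction is controlling these periodic correctors uniformly in $N$, together with the corresponding two-scale expansion estimate on the torus. This is a straightforward, but non-trivial, bookkeeping adaptation of the Gloria-Otto argument: one replaces the lattice Green's function with its torus analogue throughout, and tracks how the finite-volume modifications affect the variance and moment bounds on the correctors. Since the paper labels the result "a simple adaptation" of \cite[Corollary 1.2]{gloria2014optimal}, I would not reprove these corrector estimates from scratch but invoke the infinite-volume ones and verify that the periodization preserves them up to constants absorbed in the implicit $\lesssim$.
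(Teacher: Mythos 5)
The paper does not actually prove this statement: Theorem~\ref{thm:GON} is imported as \cite[Corollary 1.2]{gloria2014optimal}, with the text only remarking that it is a ``simple adaptation'' of that result. Your proposal is ultimately also a citation of Gloria--Neukamm--Otto, so in that sense you take the same route; the problem is in how you describe what needs adapting. The cited corollary is not an infinite-lattice statement that must be periodized: it is formulated precisely for i.i.d.\ conductances on a discretized unit torus, with periodic correctors and with the same normalized discrete Laplacian conventions that this paper adopts (which is exactly why the paper matches its definition of $\Delta_N$ to \cite{gloria2014optimal} and can call the adaptation ``simple''). Consequently your second step --- re-deriving spectral-gap inputs and corrector bounds for the periodized environment ${\bf a}_N$ with the torus Green's function --- addresses a difficulty that does not arise; and the attribution of $c_2(N)=\log N$ to the periodic identification is also off: the logarithm reflects the growth of the corrector variance in $d=2$ and is already present in the cited estimates, independently of any periodization.

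Two further cautions. First, had the cited result really been posed on $\bb Z^d$, the part you defer as ``straightforward but non-trivial bookkeeping'' (uniform-in-$N$ bounds on periodic correctors and a two-scale expansion on the torus) would have been the entire mathematical content of the theorem, not a verification to be waved through, so as written the proposal would not have closed that gap. Second, the genuine checks needed to quote \cite[Corollary 1.2]{gloria2014optimal} in the form \eqref{eq:thmGON} are more mundane than those you list: matching the normalization of $\naN$, $\Delta_N$ and of $\ell^2(\bb T^d_N)$, imposing the mean-zero conditions on $g_N$ and $f_N$, and confirming that the class of right-hand sides covered by the corollary contains the data used later, namely $g_N=\ahom\lambda^{(N)}_k\phi^N_k$, i.e.\ restrictions of smooth functions on $\bb T^d$ (this is why the statement carries the hypothesis $f\in C^\infty(\bb T^d)$). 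Your rescaling computation in the first step is fine, but it is a change of conventions rather than a reduction from a different setting.
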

We will be interested in the case $g_N=-\ahom\lambda^{(N)}_k \phi^N_k$, where
\[
  \lambda^{(N)}_k
  := 4  N^2 \sum_{i=1}^d
  \sin^2\left(\frac{\pi k_i}{N} \right),
\]
are the eigenvalues of the normalized discrete Laplacian operator.
Very conveniently, in this case, $\tilde{f}_N=\phi^N_k$.
In the following lemma, we will denote by $\varphi^N_k$ the solution to \eqref{eq:DiscPDEHom}.

Now,  let us state the representation of the Green's functions $G^{N,{\bf a}}$
defined in \eqref{def-Green-function}. 

\begin{lemma}\label{lem:exp-for-G}
  For all ${\bf a} \in \Omega(\Lambda)$ and any $N \ge 1$, we have
  \begin{equation}
    G^{N,{\bf a}}( \hat{x}, \hat{y})
    =
    G^{N,{\bf a}}(\hat{y}, \hat{x})
    =
    \frac{1}{2d}\frac{1}{N^{d}}
    \sum_{k \in \bb Z^d_N \setminus\{0\} }
   \frac{\phi_k(\hat{x}) \overline{\varphi^N_{k}(\hat{y})}}{\ahom\lambda^{(N)}_k},
  \end{equation}
  for all $\hat{x}, \hat{y}\in \bb T^d_N$.
\end{lemma}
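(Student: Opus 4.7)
The plan is to mirror the proof of Lemma \ref{lem:repGDeps} in the discrete setting, replacing the Dirichlet eigenbasis by the Fourier basis on $\bb T_N^d$. Three ingredients do the work: (i) $\{\phi_k^N\}_{k\in\bb Z_N^d}$ is orthonormal in $\ell^2(\bb T_N^d)$; (ii) $-\naN$ is self-adjoint on $\ell^2(\bb T_N^d)$, which follows from a direct discrete integration by parts using the edge-symmetry ${\bf a}_{N,\{\hat x,\hat y\}}={\bf a}_{N,\{\hat y,\hat x\}}$; and (iii) the pseudo-eigenfunctions $\varphi_k^N$, introduced just before the lemma, satisfy $-\naN\varphi_k^N=\ahom\lambda_k^{(N)}\phi_k^N$ together with the mean-zero normalisation $\sum_{\hat x}\varphi_k^N(\hat x)=0$ (this is forced because the right-hand side is mean-zero and \eqref{eq:DiscPDEHom} imposes the mean-zero constraint on its solution). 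The symmetry $G^{N,{\bf a}}(\hat x,\hat y)=G^{N,{\bf a}}(\hat y,\hat x)$ is then obtained by pairing \eqref{def-Green-function} (viewed as an equation in its first variable for fixed $\hat x$) against $G^{N,{\bf a}}(\hat y,\cdot)$, transferring $-\naN$ by (ii), and discarding the constant $-N^{-d}$ using the mean-zero normalisation on the Green's function; comparing with the symmetric computation yields the identity.

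For the expansion, for fixed $\hat x\in\bb T_N^d$ I write
\[
G^{N,{\bf a}}(\hat x,\cdot)=\sum_{k\in\bb Z_N^d}c_k(\hat x)\,\phi_k^N(\cdot),\qquad c_k(\hat x):=\bigl(G^{N,{\bf a}}(\hat x,\cdot),\phi_k^N\bigr)_{\ell^2(\bb T_N^d)}.
\]
The mean-zero constraint forces $c_0(\hat x)=0$. For $k\neq 0$, the idea is to replace $\phi_k^N$ by $-(\ahom\lambda_k^{(N)})^{-1}\naN\varphi_k^N$, transfer the operator by self-adjointness, and invoke the defining equation of $G^{N,{\bf a}}$. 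This yields
\[
c_k(\hat x)=\frac{1}{\ahom\lambda_k^{(N)}}\bigl(-\naN G^{N,{\bf a}}(\hat x,\cdot),\varphi_k^N\bigr)_{\ell^2(\bb T_N^d)}=\frac{1}{\ahom\lambda_k^{(N)}}\bigl(\delta^N_{\hat x,\cdot}-N^{-d},\varphi_k^N\bigr)_{\ell^2(\bb T_N^d)}.
\]
The mean-zero property of $\varphi_k^N$ annihilates the $-N^{-d}$ term, while the pairing against $\delta^N_{\hat x,\cdot}$ produces a scalar multiple of $\overline{\varphi_k^N(\hat x)}$. Substituting back into the Fourier series gives the stated representation.

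The main obstacle will not be conceptual but purely book-keeping: one must track the precise constant arising from the normalised inner product $(\cdot,\cdot)_{\ell^2(\bb T_N^d)}$ together with the convention used in \eqref{def-Green-function} for $\delta^N_{\hat x,\hat y}$, so that the global prefactor matches the $(2d)^{-1}N^{-d}$ appearing in the statement. The cleanest way to discharge this is by consistency: specialising to ${\bf a}\equiv 1$, one has $\ahom=1$ and $\varphi_k^N=\phi_k^N$, and the expression must reduce to the homogeneous formula \eqref{eq:defGdTorus}, which fixes the normalisation unambiguously.
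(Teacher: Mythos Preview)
Your proposal is correct and follows exactly the approach the paper intends: the paper omits the proof entirely, saying only that it ``is similar to the proof of Lemma~\ref{lem:repGDeps}'' and ``much simpler as the sum only has a finite number of terms.'' Your argument---expand $G^{N,{\bf a}}(\hat x,\cdot)$ in the orthonormal Fourier basis, compute each coefficient by writing $\phi_k^N=-(\ahom\lambda_k^{(N)})^{-1}\naN\varphi_k^N$, transfer the operator by self-adjointness, and evaluate using the defining equation of the Green's function---is precisely the discrete mirror of the paper's continuous proof, and your consistency check against the homogeneous formula~\eqref{eq:defGdTorus} to pin down the prefactor $(2d)^{-1}N^{-d}$ is a clean way to discharge the book-keeping.
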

This proof is similar to the proof of Lemma~\ref{lem:repGDeps} and will be therefore omitted.
It is much simpler as the sum only has a finite number of terms.
The next lemma is just an application of Theorem~\ref{thm:GON}.

\begin{lemma} \label{lem:bound-homog}
  We have that
  \begin{equation}\label{eq:bound-homog}
     {\bf E} [\| \varphi^N_k - \phi_k\|^2_{ \ell^2(\bb T^d_N)}]
     \lesssim_{d,\Lambda}
      c_d(N)
     \frac{\|k\|^4}{N^2},
  \end{equation}
  where $c_2(N):= \log(N)$ for $d=2$ and $c_d(N):=1$ for $d \ge 3$. 
\end{lemma}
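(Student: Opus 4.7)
The plan is to apply Theorem~\ref{thm:GON} directly with the source term $g_N = \ahom \lambda_k^{(N)} \phi_k^N$. With this choice, the solution $\tilde f_N$ of the homogenized discrete equation \eqref{eq:DiscPDEHom2} is exactly $\phi_k^N$, since $-\ahom \Delta_N \phi_k^N = \ahom \lambda_k^{(N)} \phi_k^N$ by the eigenfunction identity on the discrete torus (and $\phi_k^N$ has zero mean for $k \neq 0$). On the other hand, the solution of the heterogeneous equation \eqref{eq:DiscPDEHom} with the same right-hand side is, by definition, $\varphi_k^N$.

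Plugging this into the bound \eqref{eq:thmGON} yields
\[
  {\bf E}\bigl[\|\varphi_k^N - \phi_k^N\|_{\ell^2(\bb T^d_N)}^2\bigr]
  \;\lesssim\; c_d(N)\, N^{-2}\, \|g_N\|_{\ell^2(\bb T^d_N)}^2
  \;=\; c_d(N)\, N^{-2}\, \ahom^{2} \bigl(\lambda_k^{(N)}\bigr)^2\, \|\phi_k^N\|_{\ell^2(\bb T^d_N)}^2.
\]
Since $\{\phi_k^N\}_{k \in \bb Z^d_N}$ is an orthonormal basis of $\ell^2(\bb T^d_N)$, the last factor equals $1$, and $\ahom^2$ is absorbed into the implicit constant (it depends only on $d$ and $\Lambda$).

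It remains to control $(\lambda_k^{(N)})^2$ in terms of $\|k\|^4$. Using the elementary inequality $\sin^2 x \le x^2$ inside the explicit formula for the eigenvalue gives
\[
  \lambda_k^{(N)} \;=\; 4 N^2 \sum_{i=1}^d \sin^2\!\left(\frac{\pi k_i}{N}\right) \;\le\; 4\pi^2 \sum_{i=1}^d k_i^2 \;=\; 4\pi^2 \|k\|^2,
\]
so $(\lambda_k^{(N)})^2 \lesssim \|k\|^4$, and combining with the previous display yields \eqref{eq:bound-homog}.

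There is essentially no obstacle here beyond bookkeeping: the lemma is, by design, the discrete analogue of Lemma~\ref{lem:growthError} set up so that the single invocation of the off-the-shelf estimate from \cite{gloria2014optimal} suffices. The only subtle point worth flagging is the consistency of sign and normalisation conventions between the definition of $\naN$ in \eqref{eq:def-random-operator-discrete}, the discrete Laplacian $\Delta_N$ in \eqref{eq:discLap}, and the sign of $\lambda_k^{(N)}$; these are all compatible with Theorem~\ref{thm:GON} because the source term is chosen so that $\tilde f_N = \phi_k^N$ exactly, and the constant $\ahom$ from the discrete homogenization result of \cite{gloria2014optimal} coincides with the one appearing in \eqref{eq:DiscPDEHom2}.
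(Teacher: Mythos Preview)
Your proof is correct and follows exactly the approach indicated by the paper: the paper states that the lemma ``is just an application of Theorem~\ref{thm:GON}'' with the choice $g_N = \ahom \lambda_k^{(N)} \phi_k^N$ (so that $\tilde f_N = \phi_k^N$ and $f_N = \varphi_k^N$), and you have carried out precisely this computation, including the elementary bound $\lambda_k^{(N)} \le 4\pi^2 \|k\|^2$.
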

\subsection*{Stochastic homogenization of the discrete non-homogeneous GFF}

\noindent
The proof is very similar to the proof of Theorem~\ref{thm:ConvCont} (1) hence we will point out the main differences.
We use Theorem~\ref{thm:Billingsley} with $S=H^{-\beta}_0(\bb T^d)$, with $\beta> d/4$ and $X:=\Xi^g_D$, $X_{N,K}:=\Xi_{N,K}^{g,{\bf a}}$ and $Z_K := \ahom^{-1/2}\Xi^g_K$, where
\begin{equation*} 
  \Xi_{N,K}^{g,{\bf a}} 
  :=
  \sum_{k \in  \bb Z^d_{N\wedge K}\setminus \{0\} }
  \< \Xi^{g,{\bf a}}_{D,N},\phi_k \>\phi_k,
\end{equation*}
and $a \wedge b := \min\{a,b\}$, $\Xi^{g,{\bf a}}_{D,N}$ was defined in
\eqref{def:formalField} and
\begin{equation}
  \Xi_{D,K}^{g} 
  := 
  \sum_{k \in  \bb Z^d_{ K}\setminus \{0\} }   \< \Xi^g_D,\phi_k \>\phi_k.
\end{equation}
The proof of the  argument follows similarly but using Lemma~\ref{lem:bound-homog} instead of Lemma~\ref{lem:growthError} to prove the convergence of $\Xi_{N,K}^{g,{\bf a}}$ to $\ahom^{-1/2}\Xi^{g}_{D,K}$.

\bigskip
\subsection*{Stochastic homogenization of the discrete non-homogeneous bi-Laplacian field}

\noindent
Let $\tau>0$, we want to prove that
\[
  \lim_{N\rightarrow \infty} { \bf P} \otimes {\bb P} \left[ \|\Xi_{D,N}^{b, {\bf
  a}}-\ahom^{-1}\Xi^b_D\|_{H^{-\beta}_0(D)}> \tau\right] =0.
\]
Note the trivial identity
\[
  \Xi_{D,N}^{b, {\bf a}}-\ahom^{-1}\Xi^b_D = \left ( \Xi^{b, {\bf a}}_{D,N} -
  \ahom^{-1} \Xi^b_{D,N} \right ) + \ahom^{-1} (\Xi^b_{D,N}-\Xi^b_D).
\]
For the discrete case, we compare the non-homogeneous field to a discrete homogeneous one.
 That is, we will first show that $\Xi^{b}_{D, N}$ (the homogeneous formal field) converges to $\Xi_D^{b}$ in probability according to an appropriate Sobolev norm,  as long as we choose a suitable coupling between $\xi$ and $\xi_N$ in \eqref{eq:defEtaN}.
 For this, we will take $\xi_N(\hat{x})=N^{d/2}\xi(\1_{B_N(\hat{x})})$, where $\xi$ is the same sample of the noise used in the definition of \eqref{eq:defBiLap}, $\1_A$ denotes the indicator function of the set $A$ and $B_N(\hat{x}):=\hat{x}+[-\frac{1}{2N},-\frac{1}{2N}]^d$.
 Secondly, we prove that $\Xi_{\err}^{b, N}:=\Xi_{D,N}^{b, {\bf a}}-\ahom^{-1}\Xi^{b}_{D}$ vanishes in this same Sobolev space using estimates given in \cite{gloria2014optimal}.

We will prove the first point described above in the next proposition.

\begin{proposition}\label{prop:conv-homog-disc}
  For all $\beta>\frac{d}{4}-1$, $D=\mathbb{T}^d$ we have that   
   \begin{equation}\label{eq:conv-homog-disc}
      {\bb E} \left[\|\Xi^{b}_{D,N}- \ahom^{-1}\Xi^{b}_D\|^2_{H^{-\beta}_0(D)}	\right]
  \lesssim
      N^{d-4-4\beta}.
   \end{equation}

\end{proposition}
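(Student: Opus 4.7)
The plan is to leverage the coupling $\xi_N(\hat{x}) = N^{d/2}\xi(\mathbf{1}_{B_N(\hat{x})})$ specified in the excerpt to express both $\Xi^b_D$ and $\Xi^b_{D,N}$ as $\xi$ evaluated against explicit deterministic test functions, and then to apply the white-noise isometry. For any smooth mean-zero $f$ on $\mathbb{T}^d$ one has $\Xi^b_D(f) = \xi(\psi_f)$ with $-\Delta\psi_f = f$. Using the Fourier representation of $\Xi^b_N$ on $\mathbb{T}^d_N$ together with the coupling, a direct calculation identifies $\Xi^b_{D,N}(f)$ with $c_b\,\xi(\tilde F^f_N)$, where $\tilde F^f_N$ is the step-function lift to $\mathbb{T}^d$ of the unique mean-zero solution $\tilde f_N$ of $-\Delta_N \tilde f_N = f^N - (f^N)_N$. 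The white-noise isometry then gives
\[
\mathbb{E}\bigl[|\Xi^b_{D,N}(f) - \bar{\mathbf{a}}^{-1}\Xi^b_D(f)|^2\bigr] = \|c_b\,\tilde F^f_N - \bar{\mathbf{a}}^{-1}\psi_f\|_{L^2(\mathbb{T}^d)}^2,
\]
and taking $f = \phi_k$ with weights $\lambda_k^{-2\beta}$ turns the proposition into a deterministic Fourier-sum estimate.

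Next I would split this sum at $\|k\|\sim N$ into low- and high-frequency regimes. In the low regime $k \in \mathbb{Z}^d_N\setminus\{0\}$, $\tilde F^{\phi_k}_N$ is the step-function lift of $\phi^N_k/\lambda^{(N)}_k$, and the pointwise error $c_b\tilde F^{\phi_k}_N - \bar{\mathbf{a}}^{-1}\phi_k/\lambda_k$ is controlled by two ingredients: the Lipschitz estimate $\|\phi_k(\hat y(\cdot)) - \phi_k\|_{L^2} \lesssim \|k\|/N$, coming from $|\nabla \phi_k|\lesssim\|k\|$ on cubes of diameter $O(1/N)$, and the Taylor expansion $|\lambda^{(N)}_k - \lambda_k| \lesssim \|k\|^4/N^2$ coming from $\sin^2(\pi k_i/N) = (\pi k_i/N)^2 + O(k_i^4/N^4)$. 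Combining these yields a per-mode bound of order $\|k\|^{-2}/N^2$, whose $\lambda_k^{-2\beta}$-weighted sum is
\[
N^{-2}\sum_{1\le\|k\|\lesssim N}\|k\|^{-2-4\beta} \;\lesssim\; N^{d-4-4\beta}
\]
in the critical regime $\beta < d/4 - 1/2$ (and an even faster rate beyond it).

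For the high-frequency regime $k \notin \mathbb{Z}^d_N$, the continuous contribution is $\bar{\mathbf{a}}^{-2}\sum_{\|k\|\gtrsim N}\lambda_k^{-2-2\beta}$, which by Weyl's law and integral comparison is $\asymp N^{d-4-4\beta}$ precisely under $\beta > d/4-1$; this is where the restriction on $\beta$ in the proposition originates. The discrete contribution involves aliasing, since $\phi_k|_{\mathbb{T}^d_N} = \phi^N_{[k]}$ where $[k]$ is the representative of $k$ modulo $N\mathbb{Z}^d$, so the map $k\mapsto \tilde F^{\phi_k}_N$ depends only on $[k]$ and produces the same constant for every $k$ in a given residue class. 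Organising the sum by residue classes and bounding $\sum_{k\equiv [k]\,(\mathrm{mod}\,N),\, k\notin \mathbb{Z}^d_N}\lambda_k^{-2\beta}$ by the same integral comparison yields the target rate once more.

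The main obstacle I anticipate is the bookkeeping of these aliased high-frequency terms of $\Xi^b_{D,N}$ and matching every piece against the dimensional count $d-4-4\beta$, especially in the critical regime $\beta$ just above $d/4-1$ where the integral $\int_N^\infty r^{d-5-4\beta}\,dr$ only barely converges; the low-frequency estimate is essentially routine once the white-noise isometry has been set up. Combining the low- and high-frequency bounds via the triangle inequality then gives the claimed bound $\lesssim N^{d-4-4\beta}$.
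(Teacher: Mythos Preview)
Your overall approach is precisely the paper's: both compute $\langle\Xi^b_{D,N},\phi_k\rangle$ via the coupling $\xi_N(\hat x)=N^{d/2}\xi(\mathbf 1_{B_N(\hat x)})$, reduce to deterministic $L^2$ estimates by the white-noise isometry, and split into $k\in\mathbb Z^d_N$ and $k\notin\mathbb Z^d_N$. Your low-frequency per-mode bound $\|k\|^{-2}N^{-2}$ (from the Lipschitz estimate on $\phi_k$ and the Taylor expansion of $\lambda^{(N)}_k$) and the continuous tail $\sum_{\|k\|\gtrsim N}\lambda_k^{-2-2\beta}\asymp N^{d-4-4\beta}$ reproduce the paper's computation exactly. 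Where you are more careful than the paper is in recognising the aliased discrete modes: the paper asserts that $\Xi^b_{D,N}(\phi_k)=0$ for $k\notin\mathbb Z^d_N$, which is false for a sum of Dirac masses, whereas you correctly note $\langle\Xi^b_{D,N},\phi_k\rangle=\langle\Xi^b_{D,N},\phi_{[k]}\rangle$.

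However, your claim that the residue-class sum ``yields the target rate once more'' does not go through in the stated range. For each $k_0\in\mathbb Z^d_N\setminus\{0\}$ one has $\sum_{\ell\neq 0}\lambda_{k_0+N\ell}^{-2\beta}\asymp N^{-4\beta}\sum_{\ell\neq 0}\|\ell\|^{-4\beta}$, which converges only when $\beta>d/4$, strictly stronger than the hypothesis $\beta>d/4-1$. Indeed a finite combination of Dirac masses lies in $H^{-\beta}_0(\mathbb T^d)$ only for $\beta>d/4$, so for $\beta\in(d/4-1,\,d/4]$ the left-hand side of the proposition is almost surely $+\infty$ with the delta-function formal field; this is a gap shared by the paper's argument, not a defect peculiar to your strategy. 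A natural remedy is to replace the sum of deltas by the piecewise-constant interpolant over the boxes $B_N(\hat x)$, whose $k$-th Fourier coefficient acquires a factor $\prod_i\operatorname{sinc}(\pi k_i/N)$ that damps the aliased modes and restores summability down to $\beta>d/4-1$. Separately, the factor $\bar{\mathbf a}^{-1}$ in the statement is evidently a slip (compare the splitting just above the proposition): $\Xi^b_{D,N}$ is the \emph{homogeneous} discrete field, and your constants $c_b=(2d)^{-1}$ and $\bar{\mathbf a}^{-1}$ will not cancel, so do not try to force them to.
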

\begin{proof}

Again, it will be convenient to study the action of the field on the basis of eigenfunctions of the Laplacian.
Remember that, as we are on the torus (both in the discrete and the continuous cases), such basis of eigenfunctions is given by the Fourier basis $\phi_{k}:= \exp( 2\pi \iota k\cdot x )$.
Let us start by calculating explicitly $\Xi^b_{N}(\phi_k)$ for $k \in \bb Z^d_N$, 
\begin{align} \label{eq:XiNhomk} 
   \Xi^b_{N}(\phi_k)
&=
\frac{1}{2d} \frac{1}{N^{d/2}} \nonumber
    \sum_{\hat{z} \in \bb T^d_N}  \sum_{\hat{y} \in \bb T^d_N}
    G^N(\hat{z},\hat{y}) \xi ( N^{d/2}\1_{B_N(\hat{y})}) 
    \overline{\phi_k(\hat{z})}
\\ &= \nonumber
   \frac{1}{2d} \frac{1}{\lambda_k^{(N)}}
    \xi \left(
      \sum_{\hat{y} \in \bb T^d_N} \phi_k(\hat{y})\1_{B_N(\hat{y})}
    \right) 
\\ &=:
 (2d\lambda_k^{(N)})^{-1}
    \xi (\tilde{\phi}^N_k)
\end{align}
where in the first identity we used that $\sum_{\hat{z} \in \mathbb{T}^d_N} G^N(\hat{z},\hat{y})=0$ for any $\hat{y}$.
A simple computation shows that (see \cite[Lemma~7]{cipriani2018scaling})
\begin{equation}\label{eq:art-disc-1}
  \frac{1}{\lambda^{(N)}_k} = \frac{1}{\lambda_k} + \mc O( N^{-2}).
\end{equation}

On the other hand, using the expansion of the Green's function $G_{\bb T^d}$ on the torus, we get that $ \Xi^b_D(\phi_k)= {\lambda_k}^{-1} \xi(\phi_k)$.
Notice that we can estimate $\|\phi_k - \tilde{\phi}_k^N\|^2_{L^2(\bb T^d)} \lesssim \frac{\|k\|^2}{ N^2}$.
Therefore, for $k \in \bb Z^d_N\setminus\{0\}$, we have 
\begin{align*}
  {\bb E}\left[| \<\Xi^b_{D,N} - \ahom^{-1}\Xi_D^{b},\phi_k\>|^2\right]
  \lesssim
  \frac{1}{\|k\|^{2}N^2}.
\end{align*}
As $\Xi^{b}_{D,N}(\phi_k)=0$ if $k \not \in \bb Z^d_N$,  we get that for any $\beta$,
we have
\begin{align*}
  {\bb E} \left[\|\Xi^b_{D,N}- \ahom^{-1}\Xi_D^{b}\|^2_{H^{-\beta}_0(D)}	\right]
  &\lesssim_{\beta,d}
  \sum_{k \in \bb Z^d_N \setminus \{0\}} 
  {\bb E}[| \Xi^b_N(\phi_k) - \ahom^{-1}\Xi_D^{b}(\phi_k)|^2] \|k\|^{-4\beta}
  \\ &\phantom{\lesssim_{\beta,d}} +
 \ahom^{-2} \sum_{k \in \bb Z^d \setminus \bb Z^d_N} 
  {\bb E} [| \Xi^b_D(\phi_k )|^2  ]\|k\|^{-4\beta}
  \\&\lesssim_{\beta,d,\ahom}
    \frac{N^{d-2-4\beta}}{N^2}
  + 
  N^{d-4-4\beta},
\end{align*}
proving \eqref{eq:conv-homog-disc}. 
\end{proof}

We now proceed to show that $\Xi_{\err}^{b, N}$ converges in probability to $0$ in the space $H^{-\beta}_0(D)$  for $\beta>\frac{d}{4}-\frac{1}{2}$.

\begin{proposition}\label{prop:bound-norm}
For $d \ge 2$,  as $N \longrightarrow \infty$, we have
\[
  {\bf E} \otimes {\bb E} \Bigg[
  \|\Xi_{\err}^{b, N}\|^2_{H^{-\beta}_0(D)} \Bigg] 
  \lesssim_{\beta,d,\Lambda}c_d(N) N^{d-2-4\beta},
\]
where $c_d(N)=\log (N)$ for $d=2$ and $c_d(N)=1$ otherwise.
\end{proposition}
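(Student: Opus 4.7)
The strategy closely mirrors the continuous case of Subsection~\ref{subsec-proof-bilap}, now adapted to the discrete torus. The main ingredients are an explicit expansion of $\Xi^{b,{\bf a}}_{D,N}$ in the continuous Fourier basis $\{\phi_k\}_{k \in \bb Z^d}$ of $\bb T^d$ and the quantitative discrete homogenization estimate of Lemma~\ref{lem:bound-homog}.

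The first step is to derive a representation of $\<\Xi^{b,{\bf a}}_{D,N}, \phi_k\>$ analogous to \eqref{eq:XiNhomk}. Starting from the Green's function representation $\Xi^{b,{\bf a}}_N(\hat z) = \sum_{\hat y \in \bb T^d_N} G^{N,{\bf a}}(\hat z,\hat y)(\xi_N(\hat y) - (\xi)_N)$, substituting the Fourier expansion of $G^{N,{\bf a}}$ from Lemma~\ref{lem:exp-for-G}, and using the coupling $\xi_N(\hat x) = N^{d/2}\xi(\1_{B_N(\hat x)})$, one obtains
\[
	\<\Xi^{b,{\bf a}}_{D,N}, \phi_k\> = \frac{\xi(\tilde\varphi^N_k)}{2d\,\ahom\,\lambda^{(N)}_k}, \qquad k \in \bb Z^d_N\setminus\{0\},
\]
where $\tilde\varphi^N_k(y) := \sum_{\hat y\in\bb T^d_N} \varphi^N_k(\hat y)\1_{B_N(\hat y)}(y)$ is the step-function lift of $\varphi^N_k$. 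Combined with \eqref{eq:XiNhomk} for the homogeneous formal field, this yields
\[
	\<\Xi^{b,{\bf a}}_{D,N} - \ahom^{-1}\Xi^b_{D,N}, \phi_k\> = \frac{\xi(\tilde\varphi^N_k - \tilde\phi^N_k)}{2d\,\ahom\,\lambda^{(N)}_k}.
\]

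Taking the $\xi$-expectation, using the elementary identity $\|\tilde f\|_{L^2(\bb T^d)} = \|f\|_{\ell^2(\bb T^d_N)}$ for step-function lifts, and then averaging over the environment via Lemma~\ref{lem:bound-homog} together with $\lambda^{(N)}_k \gtrsim \|k\|^2$, I find that for every $k \in \bb Z^d_N\setminus\{0\}$,
\[
	{\bf E}\otimes{\bb E}\bigl[\bigl|\<\Xi^{b,{\bf a}}_{D,N} - \ahom^{-1}\Xi^b_{D,N}, \phi_k\>\bigr|^2\bigr] \lesssim \frac{c_d(N)\|k\|^4/N^2}{(\lambda^{(N)}_k)^2} \lesssim \frac{c_d(N)}{N^2}.
\]
Summing this pointwise bound against the $\lambda_k^{-2\beta}$ weight, and using that $\sum_{k \in \bb Z^d_N\setminus\{0\}} \lambda_k^{-2\beta} \lesssim N^{d-4\beta}$ for $\beta < d/4$ (and is bounded uniformly in $N$ for $\beta \ge d/4$), yields the desired order $c_d(N)\,N^{d-2-4\beta}$ for the pure homogenization error $\Xi^{b,{\bf a}}_{D,N} - \ahom^{-1}\Xi^b_{D,N}$.

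Finally, to obtain the stated bound for $\Xi^{b,N}_\err = \Xi^{b,{\bf a}}_{D,N} - \ahom^{-1}\Xi^b_D$ (as defined in the text), I split
\[
	\Xi^{b,N}_\err = \bigl(\Xi^{b,{\bf a}}_{D,N} - \ahom^{-1}\Xi^b_{D,N}\bigr) + \ahom^{-1}\bigl(\Xi^b_{D,N} - \Xi^b_D\bigr)
\]
and bound the second summand via Proposition~\ref{prop:conv-homog-disc}, whose rate $N^{d-4-4\beta}$ is of strictly smaller order than $N^{d-2-4\beta}$ and is absorbed in the final bound. The main technical obstacle lies in the careful treatment of Fourier modes $k \in \bb Z^d \setminus \bb Z^d_N$: there the discrete formal field's pairing with $\phi_k$ aliases onto the pairing with $\phi_{k \bmod N}$, so a naïve per-mode analysis of $\Xi^{b,{\bf a}}_{D,N} - \ahom^{-1}\Xi^b_{D,N}$ alone would cause the $k$-summation to diverge at high frequencies. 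The cancellation with the Fourier modes of the continuum field $\Xi^b_D$, effected through the splitting above and controlled via Proposition~\ref{prop:conv-homog-disc}, is what closes the argument on all of $\bb Z^d\setminus\{0\}$.
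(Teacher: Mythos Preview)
Your core computation --- expanding $G^{N,{\bf a}}$ via Lemma~\ref{lem:exp-for-G}, passing to the step-function lift, and applying Lemma~\ref{lem:bound-homog} to obtain the per-mode bound
\[
{\bf E}\otimes{\bb E}\bigl[|\langle \Xi^{b,{\bf a}}_{D,N}-\ahom^{-1}\Xi^b_{D,N},\phi_k\rangle|^2\bigr]\lesssim \frac{c_d(N)}{N^2},\qquad k\in\bb Z^d_N\setminus\{0\},
\]
is exactly what the paper does (its proof expands $\bb E[|\Xi^{b,N}_{\err}(\phi_k)|^2]$ through the double Green's function sum and collapses it to $\sum_{\hat z}|\phi_k(\hat z)-\varphi^N_k(\hat z)|^2/(\ahom\lambda^{(N)}_k)^2$). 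The only organisational difference is that the paper, inside the proof, \emph{redefines} $\Xi^{b,N}_{\err}$ to be the discrete-to-discrete error $\Xi^b_{D,N}-\ahom^{-1}\Xi^{b,{\bf a}}_{D,N}$ (a sum of Dirac masses), and postpones the combination with Proposition~\ref{prop:conv-homog-disc} to the paragraph \emph{after} the proof; you instead keep the text's definition $\Xi^{b,N}_{\err}=\Xi^{b,{\bf a}}_{D,N}-\ahom^{-1}\Xi^b_D$ and fold Proposition~\ref{prop:conv-homog-disc} into the proposition itself.

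Your last paragraph, however, contains a real gap. You correctly observe that for $k\in\bb Z^d\setminus\bb Z^d_N$ the pairing $\langle\Xi^{b,{\bf a}}_{D,N}-\ahom^{-1}\Xi^b_{D,N},\phi_k\rangle$ aliases onto the $(k\bmod N)$-mode and does not decay in $k$, so summing over all of $\bb Z^d$ diverges when $\beta\le d/4$. But your proposed fix --- the splitting $(\Xi^{b,{\bf a}}_{D,N}-\ahom^{-1}\Xi^b_{D,N})+\ahom^{-1}(\Xi^b_{D,N}-\Xi^b_D)$ together with Proposition~\ref{prop:conv-homog-disc} --- cannot repair this: once you apply the triangle inequality in $H^{-\beta}_0$, the first summand is on its own, and its norm is genuinely infinite for $\beta\le d/4$ (it is a finite combination of Dirac masses). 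There is no ``cancellation with the Fourier modes of $\Xi^b_D$'' available after the split. The paper's proof simply does not address this point (its final displayed line even omits the sum over $k$), so you are not disagreeing with the paper so much as noticing an issue it glosses over; but your stated mechanism for resolving it does not work. For $\beta>d/4$ the aliased tail $\sum_{m\ne 0}\lambda_{k_0+Nm}^{-2\beta}$ is summable and both arguments go through; for $\beta\in(d/4-1/2,\,d/4]$ the formal field built from Dirac masses is not even an element of $H^{-\beta}_0(\bb T^d)$, and neither proof closes.
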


\begin{proof}
Define a Gaussian vector $(\Xi_{\err}^{b, N}(\hat{x}))_{\hat{x} \in \bb T^d_N}$ 
by
\begin{equation}\label{eq:defbareta}
  \Xi_{\err}^{b, N}(\hat{x})
:=
  \sum_{\hat{y} \in \bb T^d_N} (G^N(\hat{x},\hat{y})-\ahom^{-1}G^{N, {\bf
  a}} (\hat{x},\hat{y})) \xi_N(\hat{x}).
\end{equation}
Call
\[
  \Xi_{\err}^{b, N} = \frac{1}{2d} \frac{1}{N^{d/2}} \sum_{\hat{y} \in \bb T^d_N}  \Xi_{\err}^{b, N}(\hat{y})\delta_{\hat{y}}
\]
and remark that here we are committing the same abuse of notation as described in Remark~\ref{rem:abuse}.

Let us start by computing $\Xi_{\err}^{b,N} (\phi_k)$, for fixed $k \in \bb Z^d$.
\begin{align*}
  {\bb E}  [|\Xi_{\err}^{b,N} (\phi_k) |^2  ] = \frac{1}{(2d)^2}\frac{1}{N^d}
  \sum_{\hat{x},\hat{y} \in \bb T^d_N}
   \phi_k (\hat{x}-\hat{y})
   {\bb E} [\Xi_{\err}^{b, N}(\hat{x})\Xi_{\err}^{b, N}(\hat{y})].
\end{align*}

Expanding the previous expression we get
\begin{align*}
  &
  \sum_{\hat{x},\hat{y} \in \bb T^d_N}
  \phi_k (\hat{x}-\hat{y})
  {\bb E} [\Xi_{\err}^{b, N}(\hat{x})\Xi_{\err}^{b, N}(\hat{y})]
  \\& =
  \frac{1}{N^{2d}}
  \sum_{\hat{x},\hat{y} \in \bb T^d_N}
  \sum_{k_1,k_2 \in \bb Z^d_N \setminus\{0\}}
  \phi_{k-k_1} (\hat{x}) \overline{\phi_{k-k_2} (\hat{y}) }
  \sum_{\hat{z} \in \bb T^d_N}
  \frac{\phi_{k_1}(\hat{z})-\varphi^N_{k_1}(\hat{z})}{
  \ahom\lambda^{(N)}_{k_1}}
  \overline{
  \frac{\phi_{k_2}(\hat{z})-\varphi^N_{k_2}(\hat{z})}{
  \ahom\lambda^{(N)}_{-k_2}}}
  \\& =
  \sum_{k_1,k_2 \in \bb Z^d_N \setminus\{0\}}
  \delta_{k,k_1} \delta_{k,k_2}
  \sum_{\hat{z} \in \bb T^d_N}
  \frac{\phi_{k_1}(\hat{z})-\varphi^N_{k_1}(\hat{z})}{
  \ahom\lambda^{(N)}_{k_1}}
  \overline{
  \frac{\phi_{k_2}(\hat{z})-\varphi^N_{k_2}(\hat{z})}{
  \ahom\lambda^{(N)}_{-k_2}}}
\\& =
  \sum_{\hat{z} \in \bb T^d_N}
  \frac{|\phi_{k}(\hat{z})-\varphi^N_{k}(\hat{z})|^2}{
  ( \ahom\lambda^{(N)}_{k} )^2}.
\end{align*}
By using Lemma \ref{lem:bound-homog}, we get
 \[
  {\bf E} \otimes {\bb E} \Bigg[
  \|\Xi_{\err}^{b, N}\|^2_{H^{-\beta}_0(D)} \Bigg] =
    {\bf E} \Bigg[ \frac{1}{(2d)^2N^{d}}
    \sum_{\hat{x},\hat{y} \in \bb T^d_N}
    \phi_k (\hat{x}-\hat{y})
    {\bb E} [\Xi_{\err}^{b, N}(\hat{x})\Xi_{\err}^{b, N}(\hat{y})] \Bigg]
    \lesssim
    \frac{c_d(N)}{N^2}.
 \]

\end{proof}

Finally, let $\beta > \frac{d}{4} - \frac{1}{2}$ and choose $\kappa>0$ sufficiently small so that
$\beta> \frac{d}{4}- \frac{1}{2} + \frac{\kappa}{2}$, then we have
\begin{align*}
 { \bf P} \otimes {\bb P} \left[ \|\Xi_{D,N}^{b, {\bf
 a}}-\ahom^{-1}\Xi^b_D\|_{H^{-\beta}_0(D)}> N^{-\kappa}\right]
&  \le
  {\bf P} \otimes {\bb P} \left[ \|\Xi_{D,N}^{b, {\bf
  a}}-\ahom^{-1}\Xi^b_D\|_{H^{-\beta}_0(D)}> \frac{N^{-\kappa}}{2}\right]
  \\ & \phantom{m} +
  {\bf P} \otimes {\bb P} \left[ \|\Xi_{D,N}^{b}-\Xi^b_D\|_{H^{-\beta}_0(D)}>
  \frac{N^{-\kappa}}{2\ahom}\right]
\\ &  \lesssim 
  c_d(N)N^{d-2-4\beta+2\kappa}
  + 
  N^{d-4-4\beta+2\kappa},
\end{align*}
which vanishes as $N$ goes to infinity.

\section{Discussion}\label{sec:conclusion}

In this section, we provide a few remarks regarding possible generalizations and comparison to other results.

\subsection*{Other results on large scale behaviour of Gaussian fields in random environments}

In this article, we focused on random environments that took the form of random uniformly elliptic differential operators.
Results regarding the maxima of such fields inside of a percolation cluster in dimension $2$ were obtained in \cite{schweiger2022maximum}. 
On the other hand, the thermodynamic limit and the height fluctuation of (possibly Gaussian) random fields associated to Gibbs measures with random masses are studied in \cite{dario2021convergence,dario2021random}. 

\subsection*{Convergence in probability of the non-homogeneous GFF}

A natural question is whether the convergence of the (non-homogeneous) GFF can be improved to a convergence in probability.
For this, we need a coupling in terms of a elliptic PDE which allows us to employ stochastic homogenization techniques.
Indeed, there is a natural coupling by writing the desired fields as the solution to PDEs similar to the one found in \cite{gu2017generalized}, in which the authors define the notion of \emph{generalized Gaussian free field}.
  
Unfortunately, to be able to extract results from such coupling, we would need to obtain a bound similar to the one found in Lemma~\ref{lem:growthError} but for the quantity $\| b_{\varepsilon}\nabla \varphi^\varepsilon_k - \bar{b} \nabla\phi_k\|_{L^2(D)}$ where $b_\varepsilon := \sqrt{{\bf a}_{\varepsilon}}$ and $\bar{b}:= \sqrt{\bar{{\bf a}}}$.
This is not expected to converge as $\varepsilon$ vanishes.
Indeed, one can see in \cite{Armstrong2019,gloria2014optimal} that $\varphi^\varepsilon_k$ needs the so called \emph{first-order correctors} in order to converge to its homogenized counter part in $H^{1}_0(D)$.

\subsection*{Discretized domains} 
In this article, we derive scaling limits in the continuous setting in a domain and for the discrete setting on a torus.
However, we do believe that such results could be extended to  discretized domains.
The proof would require both adapting the techniques of quantitative discrete stochastic homogenization near the boundary.
One would also need to account for the fact that the discretized eigenfunctions of the Laplacian cease to   be the same as the eigenfunctions of the discretized Laplacian.
One should be able to bound such deterministic errors in such context, similarly to \cite{burago2015graph} where they bound such error for manifolds without boundary.

\subsection*{General fractional fields} 
We focused in the case of the GFF and bi-Laplacian fields, one could wonder what happens for general fractional Gaussian fields, see \cite{Lodhia2016}.
This seem much harder, as one would need to deal with stochastic homogenization for non-local operators, which seems far from the scope of the current available theory of quantitative stochastic homogenization.

\subsection*{Restricted bi-Laplacian field}
In this article, we used the \emph{eigenvalue fractional field} definition for the bi-Laplacian field on a domain $D$.
This was particularly important in order to use stochastic homogenization techniques to couple both the homogenized and non-homogeneous versions of the bi-Laplacian field.

One can however wonder if it is possible to use the so-called \emph{zero-boundary} definition of fractional Gaussian fields which is more similar in spirit to our definition of the GFF, i.e, based on covariance functions.
However, in this setting, the covariance of the bi-Laplacian field is the Green's function of the bi-Laplacian operator with Neumann instead of Navier boundary conditions, see Remark~\ref{rem:BoundaryCondGreen}.

We believe a notion of non-homogeneous bi-Laplacian field would also converge in law (instead of in probability) to a homogeneous one via stochastic homogenization techniques.
To do so, one would just need to prove the results we used from \cite{Armstrong2019,gloria2014optimal} in the context of differential operators of order $4$.

\subsection*{Acknowledgements}
The authors would like to thank Jean-Christophe Mourrat, Alessandra Cipriani and Rajat Hazra for fruitful conversations and for providing useful references for this article.

\bibliographystyle{alpha}
\bibliography{library}

\end{document}